\newtheorem{theorem}{Theorem}
\newtheorem{remark}[theorem]{Remark}
\newtheorem{proposition}[theorem]{Proposition}
\newtheorem{lemma}[theorem]{Lemma}
\newcommand{\N}{{\mathbb N}}
\newcommand{\HH}{\mathbb H}
\newcommand{\R}{{\mathbb R}}
\newcommand{\T}{{\mathbb T}}
\newcommand{\p}{\partial}
\def\Ec{\mathcal E }
\def\ds{\displaystyle}
\def\eps{\varepsilon}
\def\vphi{\varphi}
\def\d{\delta}
\def\t{\tau}
\title[Data dependence of approximate current-vortex sheets]{Data dependence of approximate current-vortex sheets\\ near the onset of instability}
\author[A.~Morando] {Alessandro Morando}
\address{DICATAM, Sezione di Matematica, \newline \indent
Universit\`a di Brescia,\newline \indent Via Valotti, 9, 25133 BRESCIA, Italy}
\email{alessandro.morando@unibs.it, paolo.secchi@unibs.it, paola.trebeschi@unibs.it}
\author[P.~Secchi]{Paolo Secchi}
\author[P.~Trebeschi]{Paola Trebeschi}
\subjclass[2010]{35Q35, 76E17, 76E25, 35R35, 76B03.}
\keywords{{Magneto-hydrodynamics, incompressible fluids, current-vortex sheets, interfacial stability and instability}}
\thanks{The authors are supported by the national research project PRIN 2012 \lq\lq Nonlinear Hyperbolic Partial Differential Equations, Dispersive and Transport Equations: theoretical and applicative aspects\rq\rq.}
\date{\today}
\begin{document}

\begin{abstract}
The paper is concerned with the free boundary problem for 2D current-vortex sheets in ideal incompressible
magneto-hydrodynamics near the transition point between the linearized stability and instability.
In order to study the dynamics of
the discontinuity near the onset of the instability, Hunter and Thoo \cite{hunter-thoo} have introduced an asymptotic quadratically nonlinear integro-differential equation for the amplitude of small perturbations of the planar discontinuity.

The local-in-time existence of smooth solutions to the Cauchy problem for the amplitude equation was shown in \cite{M-S-T:ONDE1, M-S-T:ONDE2}. In the present paper we prove the continuous dependence in strong norm of solutions on the initial data. This completes the proof of the well-posedness of the problem in the classical sense of Hadamard.

\end{abstract}

\maketitle

\section{Introduction}
\label{sect1}


In the present paper we consider the following equation
\begin{equation}\label{equ1}
\varphi_{tt}-\mu\varphi_{xx}=\left(\frac12\mathbb H[\phi^2]_{xx}+\phi\varphi_{xx}\right)_{\!\!x}\,,\qquad\phi=\mathbb H[\varphi]\,,
\end{equation}
where the unknown is the scalar function $\varphi=\varphi(t,x)$, where $t$ denotes the time, $x\in\R$ is the space variable and $\mathbb H$ denotes the Hilbert transform with respect to $x$, and $\mu$ is a constant. Hunter and Thoo \cite{hunter-thoo} have derived this asymptotic equation in order to study the dynamics of 2D current-vortex sheets in ideal incompressible
magneto-hydrodynamics, near the transition point between the linearized stability and instability.

Equation \eqref{equ1} is an integro-differential evolution equation in one space dimension, with quadratic nonlinearity. 
This is a nonlocal equation of order two: in fact, it may also be written as
\begin{equation}
\begin{array}{ll}\label{equ1bis}
\vphi_{tt}-\mu\vphi_{xx} = \left(  [ \HH;\phi
]\partial_x  \phi
_{x} + \HH[\phi
^2_x]\right)_x \,,
\end{array}
\end{equation}
where $[ \HH;\phi
]\partial_x$ is a pseudo-differential operator of order zero. This alternative form \eqref{equ1bis} shows that \eqref{equ1} is an equation of second order, due to a cancelation of the third order spatial derivatives appearing in \eqref{equ1}.

Equation \eqref{equ1} also admits the alternative spatial form
\begin{equation}\label{equ1ter}
\begin{split}
\varphi_{tt}-\left(\mu-2\phi_x\right)\varphi_{xx}+\mathcal Q\left[\varphi\right]=0\,,
\end{split}
\end{equation}
where 
\begin{equation}\label{termine_nl}
\mathcal Q\left[\varphi\right]:=-3\left[\mathbb H\,;\,\phi_x\right]\phi_{xx}-\left[\mathbb H\,;\,\phi\right]\phi_{xxx}\,.
\end{equation}
The alternative form \eqref{equ1ter} puts in evidence the difference $\mu-2\phi_x$ which has a meaningful role. In fact it can be shown that the linearized operator about a given basic state is elliptic and \eqref{equ1} is locally linearly ill-posed in points where 
\[
\mu-2\phi_x<0.
\]
On the contrary, in points where
\begin{equation}
\begin{array}{ll}\label{extstab}

\mu-2\phi
_{x}>0

\end{array}
\end{equation}
the linearized operator is hyperbolic and \eqref{equ1} is locally linearly well-posed, see \cite{hunter-thoo}. In this case we can think of  \eqref{equ1ter} as a nonlinear perturbation of the wave equation.

The local-in-time existence of smooth solutions to the Cauchy problem for \eqref{equ1}, under the above stability condition \eqref{extstab}, was shown in \cite{M-S-T:ONDE1, M-S-T:ONDE2}. In the present paper we prove the continuous dependence in strong norm of solutions on the initial data. This completes the proof of the well-posedness of \eqref{equ1} in the classical sense of Hadamard, after existence and uniqueness. 

As written in Kato's paper \cite{kato75}, this part may be the most difficult one, when dealing with hyperbolic problems. Our method is somehow inspired by Beir{\~a}o da Veiga's perturbation theory for the compressible Euler equations \cite{beirao92,beirao93}, and its application to the problem of convergence in strong norm of the incompressible limit, see \cite{beirao1994,secchisingular}.
Instead of directly estimating the difference between the given solution and the solutions to the problems with approximating initial data, the main idea is to use a triangularization with the more regular solution to a suitably chosen close enough problem.



Let us consider the initial value problem for the equation \eqref{equ1} supplemented by the initial condition
\begin{equation}\label{id}
\varphi_{\vert\,t=0}=\varphi^{(0)}\,,\qquad \partial_t\varphi_{\vert\,t=0}=\varphi^{(1)}\,,
\end{equation}
for sufficiently smooth initial data $\varphi^{(0)},\,\varphi^{(1)}$ satisfying the stability condition
\begin{equation*}\label{stability_nl}
\mu-2\phi^{(0)}_{x}>0\,,\qquad\phi^{(0)}:=\mathbb H[\varphi^{(0)}]\,.
\end{equation*}

For the sake of convenience, in the paper the unknown $\varphi=\varphi(t,x)$ is a scalar function of  the time $t\in\mathbb R^+$  and the space variable $x$, ranging on the one-dimensional torus $\mathbb T$ (that is $\varphi$ is periodic in $x$). For all notation we refer to the following Section \ref{prbt}.

In \cite{M-S-T:ONDE2} we prove the following existence theorem.
\begin{theorem}\label{th_esistenza}
Let $s\ge 3$ be a real number. Assume $\varphi^{(0)}\in H^{s}(\mathbb T)$, $\varphi^{(1)}\in H^{s-1}(\mathbb T)$, and let $\varphi^{(0)}$, $\varphi^{(1)}$ have zero spatial mean. 
Given $0<\delta<\mu$, there exist $0<R\le1$, and constants $C_1>0$, $C_2>0$ such that, if
\begin{equation}\label{ip_dati_iniziali}
\Vert\varphi^{(0)}_x\Vert^2_{H^2(\T)}+\Vert\varphi^{(1)}\Vert^2_{H^2(\T)}<R^2\,,
\end{equation}
there exists a unique solution $\varphi\in C(I_0; H^{s}(\mathbb T))\cap C^1(I_0; H^{s-1}(\mathbb T))$ of the Cauchy problem \eqref{equ1}, \eqref{id} defined on the time interval $I_0=[0,T_0)$, where
\begin{equation}
\begin{array}{ll}\label{tmax}
\ds T_0=C_1\left( \Vert\varphi^{(0)}_x\Vert^2_{H^2(\T)}+\Vert\varphi^{(1)}\Vert^2_{H^2(\T)} \right)^{-1/2} .
\end{array}
\end{equation}
The solution $\varphi$ has zero spatial mean and satisfies, for all $t\in I_0$,
\begin{equation}\label{stab1}
\mu-2\phi_x\ge\delta\,,
\end{equation}
 \begin{equation}\label{stima_Hs}
    \Vert\varphi(t)\Vert^2_{H^s(\T)}+\Vert\varphi_t(t)\Vert^2_{H^{s-1}(\T)}\le {C_2}  \left(\Vert\varphi^{(0)}\Vert^2_{H^{s}(\T)}+\Vert\varphi^{(1)}\Vert^2_{H^{s-1}(\T)}\right)  .
    \end{equation}

\end{theorem}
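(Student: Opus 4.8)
The plan is to treat \eqref{equ1} in its quasilinear hyperbolic form \eqref{equ1ter}, that is, as a nonlinear perturbation $\vphi_{tt}-a\,\vphi_{xx}=-\mathcal Q[\vphi]$ of the variable-coefficient wave equation, with $a:=\mu-2\phi_x$, which is genuinely hyperbolic on the region where \eqref{extstab} holds. I would construct the solution by a linearized iteration: take $\vphi^{(0)}$ solving $\vphi^{(0)}_{tt}-\mu\vphi^{(0)}_{xx}=0$ with the data \eqref{id}, and given $\vphi^{(n)}$ define $\vphi^{(n+1)}$ as the solution of the \emph{linear} Cauchy problem obtained by freezing $a$ and the nonlocal commutator operators at $\phi^{(n)}=\HH[\vphi^{(n)}]$. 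Everything then reduces to one a priori energy estimate for the linear problem, uniform in the frozen state, together with a contraction estimate in a weaker norm. The zero-mean constraint is preserved throughout because the right-hand side of \eqref{equ1} is an exact $x$-derivative, so that $\frac{d^2}{dt^2}\int_\T\vphi\,dx=0$; this is what keeps $\HH$ and the inversion $\vphi=-\HH\phi$ well defined.

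For the linear estimate I introduce the high-order energy
\[
E_s(t):=\sum_{j\le s-1}\tfrac12\int_{\T}\Big((\p_x^{\,j}\vphi_t)^2+a\,(\p_x^{\,j}\vphi_x)^2\Big)dx,
\]
which, by the uniform positivity $a\ge\delta>0$, is equivalent to $\|\vphi_t\|_{H^{s-1}}^2+\|\vphi\|_{H^s}^2$ (using zero mean). Differentiating the equation $j\le s-1$ times, testing against $\p_x^{\,j}\vphi_t$ and integrating by parts, the principal part $a\vphi_{xx}$ produces exactly $\frac{d}{dt}E_s$ up to the commutator between $\p_x^{\,j}$ and the coefficient $a$ and the term where $\p_t$ falls on $a$ in the energy density; both are estimated by tame Moser-type product and commutator inequalities in terms of $\|a_x\|_{L^\infty}$ and $\|a\|_{H^{s-1}}\lesssim 1+\|\phi_x\|_{H^{s-1}}$, hence by $E_s$ itself.

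The genuine difficulty, and the feature special to \eqref{equ1}, is the nonlocal part $\mathcal Q[\vphi]=-3[\HH;\phi_x]\phi_{xx}-[\HH;\phi]\phi_{xxx}$. Although \eqref{equ1} looks third order, the leading symbols cancel (this is the passage to \eqref{equ1bis}); the surviving operator $\vphi\mapsto[\HH;\phi]\phi_{xxx}$ is nonetheless still genuinely of \emph{second} order, of the same order as the principal part, so it cannot be thrown into the source term. The key structural facts are that commutators with the Hilbert transform gain one derivative — $[\HH;a]\p_x$ is bounded on every $H^r$ (the Calderón commutator) — and that $\HH$ is skew-adjoint, so that the symmetric part of $[\HH;\phi]\phi_{xxx}$, which is all that enters $\frac{d}{dt}E_s$ after integration by parts, is milder than its apparent order: it carries a coefficient proportional to a derivative of $\phi$ and is therefore either absorbed into $a=\mu-2\phi_x$ or controlled by the tame commutator inequality
\[
\|[\HH;\phi]\psi\|_{H^{r+1}}\lesssim\|\phi_x\|_{H^{r}}\|\psi\|_{L^\infty}+\|\phi_x\|_{L^\infty}\|\psi\|_{H^{r}}.
\]
Isolating this coercive second-order structure, absorbing the part with coefficient $\sim\phi_x$, and bounding the skew and lower-order remainders so that $\mathcal Q$ contributes to $\frac{d}{dt}E_s$ only terms $\lesssim P(\|\phi_x\|_{H^{s-1}})\,E_s$, is where I expect essentially all the technical work to lie.

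Collecting the previous two steps gives, at the top level, a differential inequality of the form $\frac{d}{dt}E_s\lesssim (1+\|\phi_x\|_{W^{1,\infty}})\,E_s$ that is \emph{linear} in $E_s$, so $E_s$ is merely propagated; the lifespan is instead dictated by the lowest-order ($s=3$) energy $\mathcal E\sim\|\vphi_x\|_{H^2}^2+\|\vphi_t\|_{H^2}^2$, for which the quadratic nonlinearity yields $\frac{d}{dt}\mathcal E\lesssim\mathcal E^{3/2}$. This ODE keeps $\mathcal E$ finite up to a time $\sim\mathcal E(0)^{-1/2}$, precisely the $T_0$ of \eqref{tmax} determined by the $H^2$-size of the data in \eqref{ip_dati_iniziali}; the smallness of $\mathcal E(0)$ also keeps $\|\phi_x\|_{L^\infty}$ small, via $H^2\hookrightarrow W^{1,\infty}$, so that a continuation argument secures $a^{(n)}=\mu-2\phi^{(n)}_x\ge\delta$ on a common interval $I_0=[0,T_0)$. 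The linear inequality for $E_s$ then yields the uniform bound \eqref{stima_Hs} for the whole sequence in $C(I_0;H^s)\cap C^1(I_0;H^{s-1})$. Finally, a parallel estimate for the differences $\vphi^{(n+1)}-\vphi^{(n)}$ in the weaker norm $C(I_0;H^{s-1})\cap C^1(I_0;H^{s-2})$ shows the iterates are Cauchy there; interpolation against the uniform high bound upgrades the convergence, identifies the limit as a solution, and — together with the usual weak-continuity-plus-energy-continuity argument — gives strong continuity in time with values in $H^s$. Uniqueness and the persistence of \eqref{stab1} follow from the same difference estimate.
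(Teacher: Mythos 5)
Your overall architecture --- rewriting \eqref{equ1} in the quasilinear form \eqref{equ1ter}, running a linearized iteration with the coefficient $a=\mu-2\phi_x$ frozen at the previous iterate, proving uniform high-norm bounds plus contraction in a weaker norm, recovering $\mu-2\phi_x\ge\delta$ by continuation and $H^2\hookrightarrow W^{1,\infty}$, and reading off the lifespan \eqref{tmax} from a Riccati-type inequality $\frac{d}{dt}\mathcal E\lesssim \mathcal E^{3/2}$ at the $H^3/H^2$ level --- is the same scaffolding as the actual proof (given in \cite{M-S-T:ONDE2}, to which the paper refers for this theorem), and it is consistent with the linear machinery (Lemmas \ref{lemmapsi0} and \ref{lemmapsi01}) that the present paper sets up.

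The genuine gap is in your treatment of the nonlocal term $\mathcal Q[\varphi]$, which is exactly the step you defer (``where I expect essentially all the technical work to lie''), and the reason you give for its difficulty is a wrong structural claim. You assert that $[\HH;\phi]\phi_{xxx}$ is ``genuinely of second order \dots so it cannot be thrown into the source term,'' because the only tools you allow yourself are Calder\'on-type bounds: $[\HH;\phi]\partial_x$ bounded on $H^r$, and a commutator estimate gaining exactly one derivative. With those tools $\mathcal Q[\varphi]$ only lands in $H^{s-2}$ when $\varphi\in H^s$ --- one derivative short --- and then your scheme either loses a derivative at every step (if $\mathcal Q[\varphi^{(n)}]$ is fully frozen as a source) or hinges on the unproven symmetrization argument (if the nonlocal operators act on the new iterate at top order). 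But $[\HH;v]$ is not a generic Calder\'on commutator: on the torus its kernel is $\bigl(v(x)-v(y)\bigr)\cot\frac{x-y}{2}$, which is as regular as $v$, so the commutator is smoothing to \emph{every} order and one may shift an arbitrary number of derivatives from the argument onto $v$. This is precisely Lemma \ref{lemma_comm_ale_2}, estimate \eqref{stima_comm_p}, quoted in the Appendix, and it yields Proposition \ref{lemma_stima_quadr}: writing $[\HH;\phi]\phi_{xxx}=[\HH;\phi]\partial_x(\phi_{xx})$ and applying \eqref{stima_comm_p} with $p=1$ (and with $p=0$ for the term $[\HH;\phi_x]\phi_{xx}$) gives $\Vert \mathcal Q[\varphi]\Vert_{H^{s-1}}\le C\Vert\varphi_x\Vert_{H^2}\Vert\varphi_x\Vert_{H^{s-1}}$. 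Hence, relative to the energy $(\varphi,\varphi_t)\in H^s\times H^{s-1}$, the operator $\mathcal Q$ \emph{is} a source-level term; the iteration then closes with no loss of derivatives via Lemma \ref{lemmapsi01}, and the hard step you postponed disappears. Your skew-adjointness route is not absurd --- indeed the symmetric part of $[\HH;\phi]\partial_x$ equals $-\tfrac12[\HH;\phi_x]$, which is lower order --- but as written it is only a sketch of the one claim the whole proof cannot do without, so the proposal is incomplete unless you supply \eqref{stima_comm_p} or an equivalent smoothing estimate.
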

Notice that the size of the existence time interval depends on the $H^3, H^2$ norms of the initial data, for all $s\ge3$. Every solution, with the regularity as in the statement of Theorem \ref{th_esistenza}, has the additional regularity
\begin{equation*}
\varphi\in C^2(I_0; H^{s-2}(\mathbb T)),
\end{equation*}
following from equation \eqref{equ1} and suitable commutator estimates, see \cite{M-S-T:ONDE2}, Corollary 16.


\subsection{The main result}

Now we state the main result of this paper about the continuous dependence in strong norm of solutions on the initial data.
%
%
%
%

\begin{theorem}\label{main}
Let $s\ge 3$ be a real number and $\mu>\delta>0$. Let us consider $\varphi^{(0)}\in H^{s}(\mathbb T)$, $\varphi^{(1)}\in H^{s-1}(\mathbb T)$ and sequences $\{\varphi^{(0)}_n\}_{n\in\N}\subset H^s(\T)$, $\{\varphi^{(1)}_n\}_{n\in\N}\subset H^{s-1}(\T)$, where all functions $\varphi^{(0)},\varphi^{(1)},\varphi^{(0)}_n,\varphi^{(1)}_n$ have zero spatial mean. Assume that 
$$\varphi^{(0)}_n\to\varphi^{(0)} \quad\mbox{strongly in }H^s(\T), \qquad
\varphi^{(1)}_n\to\varphi^{(1)} \quad\mbox{strongly in }H^{s-1}(\T),\;{\rm as }\; n\to+\infty.
$$ 

Let $T>0$ and set $I=[0,T]$. 
Assume that there exists a unique solution $\varphi\in C(I;H^s(\T))\cap C^1(I;H^{s-1}(\T))$ of problem \eqref{equ1}, \eqref{id} with initial data $\varphi^{(0)},\varphi^{(1)}$, and that for all $n$ there exist unique solutions $\varphi_n\in C(I;H^s(\T))\cap C^1(I;H^{s-1}(\T))$ of \eqref{equ1}, \eqref{id} with initial data $\varphi^{(0)}_n,\varphi^{(1)}_n$. All solutions satisfy \eqref{stab1}, \eqref{stima_Hs} on $I$ (with corresponding initial data in the r.h.s.) for a given constant $C_2$ independent of $n$.

Then
\begin{equation}
\begin{array}{ll}\label{}
\varphi_n\to\varphi \qquad {\rm strongly\; in }\;	\; C(I;H^s(\T))\cap C^1(I;H^{s-1}(\T)),\;{\rm as }\; n\to+\infty.
\end{array}
\end{equation}

\end{theorem}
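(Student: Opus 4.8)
The plan is to prove continuous dependence not by comparing each $\varphi_n$ directly with $\varphi$, but by inserting a single comparison solution that is one derivative more regular, in the spirit of Beir\~ao da Veiga's perturbation method. Fix a spectral cutoff $S_\eps$ (truncation of frequencies above $1/\eps$) and, given data $\varphi^{(0)},\varphi^{(1)}$, form the regularized data $S_\eps\varphi^{(0)}\in H^{s+1}(\T)$, $S_\eps\varphi^{(1)}\in H^{s}(\T)$, letting $\varphi^\eps$ denote the corresponding solution of \eqref{equ1}, \eqref{id}. A preliminary step is to show that $\varphi^\eps$ (and each $\varphi_n^\eps$) exists on all of $I=[0,T]$, with $H^s\times H^{s-1}$ norm bounded uniformly in $\eps$ and with $\|\varphi^\eps\|_{C(I;H^{s+1})}\lesssim\eps^{-1}$: this follows from Theorem \ref{th_esistenza}, whose existence time depends only on the $H^3\times H^2$ norm of the data (not increased by $S_\eps$), together with a continuation argument based on the a priori bound \eqref{stima_Hs} and on the persistence of the stability condition \eqref{stab1}, which holds because $\varphi^\eps$ stays close to $\varphi$ in a weak norm.

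The core is a difference estimate in which one of the two solutions is the more regular one. Using the form \eqref{equ1ter} and subtracting, the difference $w=\varphi-\psi$ of a rough solution $\varphi$ (data in $H^s\times H^{s-1}$) and a regular solution $\psi$ (data in $H^{s+1}\times H^s$) solves
$$
w_{tt}-(\mu-2\phi_x)w_{xx}=-2\,\HH[w]_x\,\psi_{xx}-\big(\mathcal Q[\varphi]-\mathcal Q[\psi]\big),\qquad\phi=\HH[\varphi],
$$
with $\mu-2\phi_x\ge\d>0$ by \eqref{stab1}. I would run the energy estimate for $\|w\|_{H^s}^2+\|w_t\|_{H^{s-1}}^2$ by applying $\p_x^{s-1}$ and pairing with $\p_x^{s-1}w_t$. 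The commutator with the coefficient $\mu-2\phi_x$ is controlled by a Kato--Ponce estimate, costs no derivative, and needs only $\varphi\in H^s$; the terms $\mathcal Q[\varphi]-\mathcal Q[\psi]$ are handled through the commutator-smoothing structure already exploited in \eqref{equ1bis}, \eqref{termine_nl}, where $[\HH;\cdot]$ gains one derivative, so these reduce to lower order in $w$ with constants independent of $\eps$.

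The one genuinely dangerous contribution is the top-order part of the coefficient-difference source, $\int\HH[w]_x\,(\p_x^{s+1}\psi)\,\p_x^{s-1}w_t\,dx$, in which $s+1$ derivatives fall on the regular solution. This is the main obstacle. I would integrate by parts in $x$ and then in $t$, absorbing a perfect time derivative into a modified energy $\tilde E=\tilde E(w)$, equivalent to $\|w\|_{H^s}^2+\|w_t\|_{H^{s-1}}^2$ for $\eps$ small, and thereby reducing everything except one remainder to terms bounded by $(\|\varphi\|_{H^s}+\|\psi\|_{H^s})\,\tilde E$, with no inverse power of $\eps$. The surviving remainder is $\int\HH[w]_x\,(\p_x^{s}\psi_t)\,\p_x^{s}w\,dx$, where $\|\p_x^s\psi_t\|_{L^2}\lesssim\|\psi\|_{H^{s+1}}\lesssim\eps^{-1}$ by \eqref{stima_Hs}. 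The decisive trick is to estimate the undifferentiated factor $\HH[w]_x$ in $L^\infty$ through a norm $H^\sigma$ with $3/2<\sigma<s-1$ strictly below the top: the low-norm difference estimate for $w$ closes with $\eps$-independent constants (no high derivative of $\psi$ enters it), and the mollifier gives $\|w\|_{C(I;H^\sigma)}\lesssim\eps^{\,s-\sigma}$ with $s-\sigma>1$. The remainder is then $\lesssim\eps^{\,s-\sigma}\cdot\eps^{-1}\cdot\|w\|_{H^s}=\eps^{\,s-\sigma-1}\sqrt{\tilde E}$, defeating the blow-up of $\|\psi\|_{H^{s+1}}$. Gr\"onwall's inequality yields the refined bound
$$
\|\varphi-\varphi^\eps\|_{C(I;H^s)}+\|\varphi_t-\varphi^\eps_t\|_{C(I;H^{s-1})}\le C\big(\|\varphi^{(0)}-S_\eps\varphi^{(0)}\|_{H^s}+\|\varphi^{(1)}-S_\eps\varphi^{(1)}\|_{H^{s-1}}+\eps^{\,s-\sigma-1}\big),
$$
with $C$ depending only on the uniform $H^s$ bounds and on $\d$, hence uniform in $\eps$; the right-hand side tends to $0$ as $\eps\to0$.

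Finally I would conclude by triangulating with the regularization $\varphi^\eps$ of the limiting data,
$$
\|\varphi_n-\varphi\|_{C(I;H^s)}\le\|\varphi_n-\varphi^\eps\|_{C(I;H^s)}+\|\varphi^\eps-\varphi\|_{C(I;H^s)}.
$$
The second term is $o(1)$ as $\eps\to0$ by the refined estimate. For the first term I apply the same refined estimate to the pair $(\varphi_n,\varphi^\eps)$: its constant is uniform in $n$ and $\eps$ by the hypotheses of the statement, and for fixed $\eps$, as $n\to\infty$, the data differences $\varphi_n^{(0)}-S_\eps\varphi^{(0)}$ and $\varphi_n^{(1)}-S_\eps\varphi^{(1)}$ converge in $H^s$ and $H^{s-1}$ to $\varphi^{(0)}-S_\eps\varphi^{(0)}$ and $\varphi^{(1)}-S_\eps\varphi^{(1)}$, while $\limsup_n\|\varphi_n-\varphi^\eps\|_{C(I;H^\sigma)}\lesssim\eps^{\,s-\sigma}$. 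Hence $\limsup_n\|\varphi_n-\varphi\|_{C(I;H^s)}\lesssim\|\varphi^{(0)}-S_\eps\varphi^{(0)}\|_{H^s}+\|\varphi^{(1)}-S_\eps\varphi^{(1)}\|_{H^{s-1}}+\eps^{\,s-\sigma-1}$, and letting $\eps\to0$ gives $\varphi_n\to\varphi$ in $C(I;H^s)$; the same computation for the $H^{s-1}$-energy of $w_t$ yields convergence in $C^1(I;H^{s-1})$. The whole argument hinges on the third paragraph: the derivative loss in the top-order term is compensated by trading the mollification gain $\eps^{\,s-\sigma}$ in a low norm against the loss $\eps^{-1}$ in the high norm, so that the comparison constant stays uniform in $\eps$ while all the smallness is carried by the data.
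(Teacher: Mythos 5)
Your core idea is genuinely different from the paper's, and the analytic heart of it (the third paragraph) is sound: you compare the rough solutions with the \emph{nonlinear} solution $\varphi^\eps$ generated by mollified data, and you defeat the loss of one derivative by trading the mollification gain $\|\varphi-\varphi^\eps\|_{C(I;H^\sigma)}\lesssim\eps^{s-\sigma}$ (for $3/2<\sigma<s-1$, a nonempty range since $s\ge3$) against the blow-up $\|\varphi^\eps\|_{C(I;H^{s+1})}\lesssim\eps^{-1}$. The paper does something structurally different: it differentiates the equation once, regards $\varphi_x$ and $\varphi_{n,x}$ as solutions of the \emph{linear} problem \eqref{equ2} with a forcing term, regularizes the data and the forcing of that linear problem (see \eqref{stimaeps}, \eqref{equ3}), and kills the resulting large constant $M(\eps)$ not by a quantitative rate but by an ordering of limits: first the weak-norm convergence of Proposition \ref{convs-1} is proved, then $\eps$ is fixed and $n\to\infty$. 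Your quantitative trade is a legitimate substitute for that ordering argument; incidentally, the integration by parts in $t$ and the modified energy are unnecessary, since once the low-norm estimate is closed separately the dangerous term can be bounded directly by $\|\HH[w]_x\|_{L^\infty}\,\|\p_x^{s+1}\psi\|\,\|\p_x^{s-1}w_t\|\lesssim\eps^{s-\sigma}\cdot\eps^{-1}\cdot\sqrt{\tilde E}$.

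The genuine gap is your preliminary step: the existence of $\varphi^\eps$ on all of $I=[0,T]$, with stability \eqref{stab1} and $H^s\times H^{s-1}$ bounds uniform in $\eps$ --- and every constant in your scheme (the low-norm rate $\eps^{s-\sigma}$ included, via interpolation against a uniform $H^s$ bound) hinges on it. Theorem \ref{th_esistenza} does not give this. First, it requires the smallness condition \eqref{ip_dati_iniziali}, which is \emph{not} among the hypotheses of Theorem \ref{main}: nothing is assumed about the size of $\|\varphi^{(0)}_x\|_{H^2}$, only that the particular solutions $\varphi,\varphi_n$ happen to exist on $[0,T]$ with \eqref{stab1}, \eqref{stima_Hs}; so you cannot even start $\varphi^\eps$ locally. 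Second, even granting smallness of the data, the theorem yields existence only on $[0,T_0)$ with $T_0$ given by \eqref{tmax}, possibly much smaller than $T$, and your proposed continuation must re-invoke local existence at later times $t$, which again requires \eqref{ip_dati_iniziali} for $(\varphi^\eps(t),\p_t\varphi^\eps(t))$; this is not deducible from closeness of $\varphi^\eps$ to $\varphi$, because $\varphi(t)$ itself need not satisfy any smallness, and \eqref{stima_Hs} is a conclusion of Theorem \ref{th_esistenza} on its own interval $I_0$, not an unconditional continuation criterion. This is precisely the difficulty the paper's proof is engineered to avoid: its regular comparison object $\Psi^\eps$ solves the \emph{linear} Cauchy problem \eqref{equ3}, whose solvability on all of $I$ with the estimate \eqref{stimapsi01} comes for free from Lemma \ref{lemmapsi01}, with no smallness assumption and no continuation argument. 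To repair your route you would have to either strengthen the hypotheses of Theorem \ref{main} (assuming that nearby data also generate solutions on $[0,T]$ with uniform bounds, as is done for $\varphi_n$), or prove a continuation principle / unconditional local existence theorem that neither this paper nor the cited ones provide.
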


Here we are assuming that all solutions $\vphi,\vphi_n$ are defined on the same time interval $I=[0,T]$, where $T$ is arbitrarily given, neither necessarily small nor given by \eqref{tmax}.

Using the a priori estimate \eqref{stima_Hs} and standard arguments, it is rather easy to show the continuous dependence of solutions on the initial data in the topology of $C(I;H^{s-\eps}(\T))\cap C^1(I;H^{s-1-\eps}(\T))$, for all small enough $\eps>0$. Instead, in Theorem \ref{main} we prove the continuous dependence precisely in the topology of $C(I;H^s(\T))\cap C^1(I;H^{s-1}(\T))$, i.e. in the same function space where we show the existence.
From Theorem \ref{th_esistenza} and Theorem \ref{main} we obtain that the initial value problem \eqref{equ1}, \eqref{id} is well-posed in $H^s$ in the classical sense of Hadamard.

The rest of the paper is organized as follows. In Section \ref{prbt} we introduce some notation and give some preliminary technical results. In Section \ref{proof main} we prove our main Theorem \ref{main}.

\section{Notations and preliminary results}\label{prbt}
\subsection{Notations}

In the paper we denote by $C$ generic positive constants, that may vary from line to line or even inside the same formula.

Let $\mathbb T$ denote the one-dimensional torus defined as
$\mathbb T:=\mathbb R/(2\pi\mathbb Z).$
As usual, all functions defined on $\mathbb T$ can be considered as $2\pi-$periodic functions on $\mathbb R$. 

All functions $f:\mathbb T\rightarrow\mathbb C$ can be expanded in terms of Fourier series as
\begin{equation*}
f(x)=\sum\limits_{k\in\mathbb Z}\widehat{f}(k)e^{ikx}\,,
\end{equation*}
where $\left\{\widehat{f}(k)\right\}_{k\in\mathbb Z}$ are the Fourier coefficients defined by
\begin{equation}\label{coeff_fourier}
\widehat{f}(k):=\frac1{2\pi}\int_{\mathbb T}f(x)e^{-ikx}\,dx\,,\qquad k\in\mathbb Z\,.
\end{equation}

For positive real numbers $s$, $H^s=H^s(\mathbb T)$ denotes the Sobolev space of order $s$ on $\mathbb T$, defined to be the set of functions $f:\mathbb T\rightarrow\mathbb C$ such that
\begin{equation}\label{normaHs}
\Vert f\Vert_{H^s}^2:=\sum\limits_{k\in\mathbb Z}\langle k\rangle^{2s}\vert\widehat{f}(k)\vert^2<+\infty\,,
\end{equation}
where it is set
\begin{equation}\label{bracket}
\langle k\rangle:=(1+\vert k\vert^2)^{1/2}\,.
\end{equation}
The function $\Vert\cdot\Vert_{H^s}$ defines a norm on $H^s$, associated to the inner product
\begin{equation*}
(f,g)_{H^s}:=\sum\limits_{k\in\mathbb Z}\langle k\rangle^{2s}\widehat{f}(k)\overline{\widehat{g}(k)}\,,
\end{equation*}
which turns $H^s$ into a Hilbert space.
For $s=0$ one has $H^0=L^2$. The $L^2$ norm will simply be denoted by $\|\cdot\|$.
From \eqref{normaHs} we have
\[
\Vert f\Vert_{H^s}= \| \langle \p_x\rangle^sf\|,
\]
where $\langle\partial_x\rangle^s$ is the Fourier multiplier of symbol $\langle k\rangle^s$, defined by
\begin{equation*}\label{ds}
\widehat{\langle\partial_x\rangle^s u}(k)=\langle k\rangle^s \widehat{u}(k)\,,\quad\forall\,k\in\mathbb Z\,.
\end{equation*} 
We will work with functions with zero spatial mean, so that $\widehat{f}(0)=0$. For these functions, we easily obtain from \eqref{normaHs} the Poincar\'e inequality
\begin{equation}
\begin{array}{ll}\label{poincare}

\Vert f\Vert_{H^s}\le \sqrt2\Vert f_x\Vert_{H^{s-1}} \qquad \forall s\ge1.
\end{array}
\end{equation}
For $T>0$ and $j\in\mathbb N$, we denote by $C^j([0,T]; \mathcal X)$ the space of $j$ times continuously differentiable functions $f:\mathbb R\rightarrow \mathcal X$.

\subsection{Preliminary results}

Let us consider the Cauchy problem
\begin{equation}\label{equ2}
\begin{cases}
\psi_{tt}-\left(\mu-2\phi_x\right)\psi_{xx}=F,   &\qquad t\in I, \; x\in \T,
\\
\psi_{|t=0}=\psi^{(0)}, \quad \p_t\psi_{|t=0}=\psi^{(1)}&\qquad  x\in \T,
\end{cases}
\end{equation}
with unknown the scalar function $\psi=\psi(t,x)$, and where $\phi=\mathbb H[\varphi]$ is the Hilbert transform of a given function $\varphi$, sufficiently smooth, with zero mean
 and such that
\begin{equation}\label{stab2}
\mu -2\phi_x\ge \delta \qquad t\in I,\; x\in \T,
\end{equation}
for given constants $\mu>\d>0$. Let us define
\[
\Ec(t):=\left( \|\psi_t(t)\|^2 + \int_\T(\mu -2\phi_x)|\psi_x(t)|^2  dx \right)^{1/2}.
\]

\begin{lemma}\label{lemmapsi0}
Let $\varphi\in C(I;H^3)\cap C^1(I;H^{2})$ be given and satisfying \eqref{stab2}. For all $\psi^{(0)}\in H^1$, $\psi^{(1)}\in L^2$, both functions with zero mean, and $F\in L^1(I;L^2)$ there exists a unique solution 
 $\psi\in C(I;H^1)\cap C^1(I;L^{2})$, of \eqref{equ2}, with zero mean and such that
\begin{equation}
\begin{array}{ll}\label{stimapsi0}
\ds \frac{d}{dt} \Ec
\le C \left( \|\varphi_t\|_{H^2} + \|\varphi\|_{H^3}  \right)\Ec + \| F\| ,
\end{array}
\end{equation}
for every $t\in I$.
\end{lemma}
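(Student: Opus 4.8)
The plan is to run the classical energy method for the variable-coefficient wave equation, using the weighted quantity $\Ec^2=\|\psi_t\|^2+\int_\T a\,|\psi_x|^2\,dx$ with $a:=\mu-2\phi_x$. I would differentiate $\Ec^2$ in time, substitute $\psi_{tt}=a\psi_{xx}+F$ into $\frac{d}{dt}\|\psi_t\|^2=2\,\mathrm{Re}\int\psi_{tt}\overline{\psi_t}\,dx$, and integrate by parts in $x$: the term $2\,\mathrm{Re}\int a\psi_{xx}\overline{\psi_t}\,dx$ becomes $-2\,\mathrm{Re}\int a_x\psi_x\overline{\psi_t}\,dx-2\,\mathrm{Re}\int a\psi_x\overline{\psi_{tx}}\,dx$. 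On the other hand $\frac{d}{dt}\int a|\psi_x|^2\,dx=\int a_t|\psi_x|^2\,dx+2\,\mathrm{Re}\int a\psi_{xt}\overline{\psi_x}\,dx$. The decisive structural point is that the two terms carrying the mixed derivative $\psi_{xt}$ cancel exactly (since $a$ is real, $\mathrm{Re}\int a\psi_x\overline{\psi_{tx}}=\mathrm{Re}\int a\psi_{xt}\overline{\psi_x}$); this is precisely why the coefficient $a$ is placed inside the potential energy. One is left with the clean identity $\frac{d}{dt}\Ec^2=\int a_t|\psi_x|^2\,dx-2\,\mathrm{Re}\int a_x\psi_x\overline{\psi_t}\,dx+2\,\mathrm{Re}\int F\overline{\psi_t}\,dx$, in which no second-order derivatives of $\psi$ survive.

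It then remains to bound the three surviving terms. Since $\mu$ is constant, $a_t=-2\phi_{xt}=-2\HH[\varphi_{xt}]$ and $a_x=-2\phi_{xx}=-2\HH[\varphi_{xx}]$; because $\HH$ is bounded on every $H^s(\T)$ and the one-dimensional embedding $H^1(\T)\hookrightarrow L^\infty(\T)$ holds, I get $\|a_t\|_{L^\infty}\lesssim\|\varphi_{xt}\|_{H^1}\le\|\varphi_t\|_{H^2}$ and $\|a_x\|_{L^\infty}\lesssim\|\varphi_{xx}\|_{H^1}\le\|\varphi\|_{H^3}$ (the last inequalities being immediate from the Fourier definition \eqref{normaHs}). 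Hence the first term is bounded by $C\|\varphi_t\|_{H^2}\|\psi_x\|^2$, the second by $C\|\varphi\|_{H^3}\|\psi_t\|\,\|\psi_x\|$, and the forcing term by $2\|\psi_t\|\,\|F\|$. The stability hypothesis \eqref{stab2}, i.e. $a\ge\delta>0$, gives $\delta\|\psi_x\|^2\le\int a|\psi_x|^2\,dx\le\Ec^2$, so $\|\psi_x\|\le\delta^{-1/2}\Ec$, while trivially $\|\psi_t\|\le\Ec$. Collecting these estimates yields $\frac{d}{dt}\Ec^2\le C(\|\varphi_t\|_{H^2}+\|\varphi\|_{H^3})\Ec^2+2\|F\|\,\Ec$.

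To pass from this inequality for $\Ec^2$ to the stated bound \eqref{stimapsi0} for $\Ec$ I would divide by $2\Ec$; to avoid dividing by zero where $\Ec$ vanishes, I would instead differentiate $\sqrt{\Ec^2+\eps}$ and let $\eps\downarrow 0$, which reproduces \eqref{stimapsi0} verbatim. The genuinely delicate point is not this algebra but its justification at the low regularity of the lemma: for $\psi\in C(I;H^1)\cap C^1(I;L^2)$ the quantities $\psi_{xx}$ and $\psi_{xt}$ need not be functions, so the integrations by parts and the crucial cancellation are only formal. I expect this to be the main obstacle, and I would resolve it by the standard approximation scheme—regularizing the data and the coefficient $a$ (or inserting Friedrichs mollifiers in $x$), establishing the identity for the resulting smooth solutions, controlling the mollification commutators, and passing to the limit using that all the bounds above are uniform in the regularization parameter. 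The energy identity itself is robust; it is this limiting argument where the real care is required.
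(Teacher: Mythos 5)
Your derivation of the energy inequality is essentially the paper's own: the same weighted energy $\Ec$, the same cancellation of the mixed-derivative terms (the paper obtains it by multiplying the equation by $\psi_t$ and integrating by parts rather than by differentiating $\Ec^2$ and substituting, but the resulting identity is identical), the same bounds on the coefficients via \eqref{stima_hilbert} and the embedding $H^1\hookrightarrow L^\infty$, and the same use of \eqref{stab2} through $\|\psi_x\|\le \delta^{-1/2}\Ec$ and $\|\psi_t\|\le\Ec$. The passage from the differential inequality for $\Ec^2$ to \eqref{stimapsi0} is done in the paper without comment, so your $\sqrt{\Ec^2+\eps}$ remark is, if anything, more careful than the original on that point; the same is true of your discussion of justifying the formal computation at the regularity $C(I;H^1)\cap C^1(I;L^2)$.

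However, Lemma \ref{lemmapsi0} does not only assert the estimate: it asserts existence and uniqueness of a zero-mean solution, and this part is missing from your proposal. The paper integrates \eqref{stimapsi0} by Gronwall to obtain the a priori bound \eqref{apL2}, gets existence from it by standard arguments (citing Kato) --- your mollification scheme is the natural implementation of this, so that half is at least implicitly sketched --- but uniqueness and the role of the zero-mean condition are absent. The point you are missing is that the energy controls only $\psi_t$ and $\psi_x$: applying \eqref{apL2} to the difference of two solutions with the same data shows only that the difference has vanishing time and space derivatives, i.e.\ the solution is determined up to an additive constant, and it is the zero-mean normalization that singles out one solution. Likewise, to conclude $\psi\in C(I;H^1)$ (and not merely control of $\|\psi_x\|$) one must invoke the Poincar\'e inequality \eqref{poincare}, which again uses the zero-mean property. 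These are short steps, but they are stated conclusions of the lemma, and a complete proof must include them.
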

\begin{proof}
To obtain \eqref{stimapsi0} we multiply the equation in \eqref{equ2} by $\psi_t$ and integrate over $\T$. Integrating by parts gives
\begin{equation*}
\begin{array}{ll}\label{}
\ds \frac12\frac{d}{dt} \left( \|\psi_t\|^2 + \int_\T(\mu -2\phi_x)|\psi_x|^2 dx \right) 
=2\int_\T\phi_{xx}\psi_t\psi_x\, dx -\int_\T\phi_{xt}|\psi_x|^2\, dx + \int_\T F\psi_t\, dx.
\end{array}
\end{equation*}
Then, by the Cauchy-Schwarz inequality, the Sobolev imbedding $H^1\hookrightarrow L^\infty$ and the estimate 
\begin{equation}\label{stima_hilbert}
\Vert\mathbb H[\varphi]\Vert_{H^s}\le\Vert \varphi\Vert_{H^s}\,,\qquad\forall\,\varphi\in H^s,\;  s\in\mathbb R,
\end{equation}
we obtain
\begin{equation}
\begin{array}{ll}\label{stimapsi00}
\ds \frac12 \frac{d}{dt} \left( \|\psi_t\|^2 + \int_\T(\mu -2\phi_x)|\psi_x|^2  dx \right)
\le C \left( \|\varphi_t\|_{H^2} + \|\varphi\|_{H^3}  \right)\left( \|\psi_t\|^2 + \|\psi_x\|^2  \right) + \| F\| \|\psi_t\|.
\end{array}
\end{equation}
From \eqref{stab2} we obtain the estimate
\begin{equation}\label{stimadeltapsix}
\|\psi_x\| \le \frac1{\sqrt\d}\left(  \int_\T(\mu -2\phi_x)|\psi_x|^2  dx \right)^{1/2} \le \frac1{\sqrt\d}\, \Ec,
\end{equation}
and, substituting it in \eqref{stimapsi00}, we obtain the bound
\begin{multline*}
\ds \frac12 \frac{d}{dt} \left( \|\psi_t\|^2 + \int_\T(\mu -2\phi_x)|\psi_x|^2  dx \right)
\\
\le C \left( \|\varphi_t\|_{H^2} + \|\varphi\|_{H^3}  \right)\left( \|\psi_t\|^2 + \int_\T(\mu -2\phi_x)|\psi_x|^2  dx   \right) + \| F\| \|\psi_t\|,
\end{multline*}
with a new constant $C$ also depending on $\d$. Since $\|\psi_t\|\le\Ec,$ the above inequality implies
\begin{equation}\label{stimapsi000}
\ds \frac{d}{dt} \Ec
\le C \left( \|\varphi_t\|_{H^2} + \|\varphi\|_{H^3}  \right)\Ec + \| F\| ,
\end{equation}
that is \eqref{stimapsi0}.
Applying the Gronwall lemma to \eqref{stimapsi000} gives the a priori estimate
\begin{equation*}\label{}
\Ec(t)
 \le e^{C T\max_{\tau\in I}\left( \|\varphi_t\|_{H^2} + \|\varphi\|_{H^3}  \right)} \left\{ \Ec(0) + \int^t_0\| F\|\, d\tau
\right\},
\end{equation*}
which yields
\begin{multline}\label{apL2}
\left( \|\psi_t(t)\|^2 + \|\psi_x(t)\|^2  \right)^{1/2}
\\
 \le Ce^{C T\max_{\tau\in I}\left( \|\varphi_t\|_{H^2} + \|\varphi\|_{H^3}  \right)} \left\{ \left( \|\psi^{(1)}\|^2 + (\mu +2\|\varphi^{(0)}\|_{H^2})\|\psi_x^{(0)}\|^2  \right)^{1/2} + \int^T_0\| F\|\, d\tau
\right\},
\end{multline}
for all $t\in I$, with $C$ also depending on $\d$. Using \eqref{apL2}, the existence of one solution is obtained by standard arguments, e.g. see \cite{kato}. 
By linearity of the problem, the difference of any two solutions with the same data satisfies \eqref{apL2} with zero right-hand side. This shows that the solution is defined up to an additive constant. Thus, requiring the solution to have zero mean gives one uniquely defined solution.
Since such a solution has zero mean, we can apply the Poincar\'e inequality \eqref{poincare} and obtain from \eqref{apL2}
\begin{multline*}\label{}
\left( \|\psi_t(t)\|^2 + \|\psi(t)\|^2_{H^1}  \right)^{1/2}
\\
 \le Ce^{C T\max_{\tau\in I}\left( \|\varphi_t\|_{H^2} + \|\varphi\|_{H^3}  \right)} \left\{ \left( \|\psi^{(1)}\|^2 + (\mu +2\|\varphi^{(0)}\|_{H^2})\|\psi_x^{(0)}\|^2  \right)^{1/2} + \int^T_0\| F\|\, d\tau
\right\}.
\end{multline*}
for all $t\in I$, which shows that $\psi \in C(I;H^1)\cap C^1(I;L^{2})$.
\end{proof}
The next lemma concerns the regularity of the solution $\psi$ to \eqref{equ2}.

\begin{lemma}\label{lemmapsi01}
Let $r\ge2$ and $s\ge\max\{3,r\}$. Let $\varphi\in C(I;H^s)\cap C^1(I;H^{2})$ satisfying \eqref{stab2}. For all $\psi^{(0)}\in H^r$, $\psi^{(1)}\in H^{r-1}$ with zero mean, and $F\in L^2(I;H^{r-1})$ there exists a unique solution 
 $\psi\in C(I;H^r)\cap C^1(I;H^{r-1})$ of \eqref{equ2},  with zero mean and such that
\begin{multline}\label{stimapsi01}
 \|\psi(t)\|^2_{H^{r}} +  \|\psi_t(t)\|^2_{H^{r-1}} 
\\
 \le Ce^{C T\max_{\tau\in I}\left( 1+ \|\varphi_t\|_{H^2} + \|\varphi\|_{H^{s}}  \right)}  \left\{  \|\psi^{(1)}\|^2_{H^{r-1}} + (\mu +2\|\varphi^{(0)}\|_{H^2})\|\psi^{(0)}\|^2_{H^{r}}   +  \|F  \|_{L^2(0,t;H^{r-1})}^2
\right\}
\end{multline}
for every $t\in I$.
\end{lemma}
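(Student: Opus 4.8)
The plan is to reduce the higher-order estimate to the basic energy estimate of Lemma~\ref{lemmapsi0} by commuting the equation with the Fourier multiplier $\langle\partial_x\rangle^{r-1}$. Set $\psi_r:=\langle\partial_x\rangle^{r-1}\psi$. Since $\langle\partial_x\rangle^{r-1}$ commutes with $\partial_t^2$ and $\partial_x^2$, and since $\mu$ is constant, $\psi_r$ solves a wave equation of exactly the same type,
\[
\partial_t^2\psi_r-(\mu-2\phi_x)(\psi_r)_{xx}=F_r,\qquad F_r:=\langle\partial_x\rangle^{r-1}F+2\big[\langle\partial_x\rangle^{r-1},\phi_x\big]\psi_{xx},
\]
with the same coefficient $\mu-2\phi_x$ and with initial data $\langle\partial_x\rangle^{r-1}\psi^{(0)}$, $\langle\partial_x\rangle^{r-1}\psi^{(1)}$. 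By construction $\psi_r$ has zero mean, and from \eqref{normaHs} one checks $\|\psi_r\|_{H^1}=\|\psi\|_{H^r}$ and $\|(\psi_r)_t\|=\|\psi_t\|_{H^{r-1}}$, so that the energy $\Ec_r$ associated with $\psi_r$ controls precisely the left-hand side of \eqref{stimapsi01}.

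I would then feed $\psi_r$ into the differential inequality \eqref{stimapsi0}, obtaining
\[
\frac{d}{dt}\,\Ec_r\le C\big(\|\varphi_t\|_{H^2}+\|\varphi\|_{H^3}\big)\Ec_r+\|F_r\|.
\]
The forcing splits as $\|F_r\|\le\|F\|_{H^{r-1}}+2\big\|[\langle\partial_x\rangle^{r-1},\phi_x]\psi_{xx}\big\|$; the first term is the genuine source, while the commutator must be absorbed. Using a Kato--Ponce/Moser commutator estimate together with \eqref{stima_hilbert} (to pass from $\phi$ to $\varphi$) I expect a bound of the form $\big\|[\langle\partial_x\rangle^{r-1},\phi_x]\psi_{xx}\big\|\le C\,\|\varphi\|_{H^s}\,\|\psi\|_{H^r}$. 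Since $\psi$ has zero mean and satisfies the coercivity \eqref{stimadeltapsix} under the stability condition \eqref{stab2}, the Poincar\'e inequality \eqref{poincare} gives $\|\psi\|_{H^r}=\|\psi_r\|_{H^1}\le\sqrt2\,\|(\psi_r)_x\|\le C\,\Ec_r$, so the commutator contribution is absorbed into the Gronwall coefficient. Collecting terms and using $s\ge3$ yields
\[
\frac{d}{dt}\,\Ec_r\le C\big(1+\|\varphi_t\|_{H^2}+\|\varphi\|_{H^s}\big)\Ec_r+\|F\|_{H^{r-1}}.
\]

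The estimate \eqref{stimapsi01} then follows from Gronwall's lemma on $[0,t]$: the exponential factor produces $e^{CT\max_{\tau\in I}(1+\|\varphi_t\|_{H^2}+\|\varphi\|_{H^s})}$, the initial energy is controlled by $\Ec_r(0)^2\le\|\psi^{(1)}\|_{H^{r-1}}^2+(\mu+2\|\varphi^{(0)}\|_{H^2})\|\psi^{(0)}\|_{H^r}^2$ (bounding $\|\phi_x^{(0)}\|_{L^\infty}$ by $\|\varphi^{(0)}\|_{H^2}$ via $H^1\hookrightarrow L^\infty$ and \eqref{stima_hilbert}), and the Cauchy--Schwarz inequality in time turns $\int_0^t\|F\|_{H^{r-1}}\,d\tau$ into $\|F\|_{L^2(0,t;H^{r-1})}$, the factor $T$ being absorbed into $C$. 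Existence and uniqueness of $\psi\in C(I;H^r)\cap C^1(I;H^{r-1})$ are then obtained, as in Lemma~\ref{lemmapsi0}, by a standard approximation argument: one regularizes the data and $F$, solves the resulting problems, and passes to the limit using the uniform a priori bound \eqref{stimapsi01} for the differences.

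The main obstacle is the commutator estimate, and above all the regularity bookkeeping that makes it close, which is exactly where the hypothesis $s\ge\max\{3,r\}$ is used. For $r\ge3$ one can afford the $L^\infty$-based commutator inequality, since then $\psi_{xx}\in L^\infty$ and both the terms $\|\phi_{xx}\|_{L^\infty}\|\psi\|_{H^r}$ and $\|\phi_{xx}\|_{H^{r-2}}\|\psi_{xx}\|_{L^\infty}$ are controlled by $\|\varphi\|_{H^s}\|\psi\|_{H^r}$; while for $2\le r<3$ the assumption forces $s\ge3>r$, so that the strictly higher regularity $\phi_x\in H^{s-1}$ allows one to place all the extra derivatives on the smooth coefficient and keep $\psi_{xx}$ only at the $H^{r-2}$ level. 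Verifying that these two regimes cover all $r\ge2$ is the one point requiring genuine care.
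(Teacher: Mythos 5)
Your overall strategy --- commuting $\langle\partial_x\rangle^{r-1}$ through the equation, running the basic energy estimate \eqref{stimapsi0} on $\psi_r=\langle\partial_x\rangle^{r-1}\psi$, absorbing the commutator into the Gronwall coefficient, and concluding existence by regularization --- is exactly the paper's proof. The gap is at the one point you flagged yourself: your two commutator regimes do not cover all admissible $(r,s)$. Your split is at $r=3$, but the $L^\infty$-based (Kato--Ponce) inequality needs $H^{r-2}\hookrightarrow L^\infty$, i.e. $r>5/2$, while your alternative of placing the extra derivatives on the coefficient needs $\phi_x\in H^{r-1+\sigma}$ for some $\sigma>1/2$, hence $s>r+\tfrac12$. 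Under the stated hypotheses $s$ may equal $3$ exactly, so for pairs such as $s=3$, $r=2.7$ the method you prescribe in the regime $2\le r<3$ fails (this part is rescued simply by moving the split from $3$ down to $5/2$, which is what the paper does: \eqref{stima_comm_ds} for $r>5/2$, \eqref{stima_comm_ds1} with $\sigma=3-r$ for $2\le r<5/2$). More seriously, for the borderline pair $s=3$, $r=5/2$ \emph{neither} of your regimes applies: $H^{1/2}\not\hookrightarrow L^\infty$, and only $\phi_x\in H^{2}$ is available while the coefficient route would require strictly more than $H^2$. This case is not vacuous: the lemma is later invoked with $r=s-1$, so $s=7/2$ produces exactly $r=5/2$.

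The borderline case needs a genuinely different estimate, which is why the paper proves \eqref{stima_comm_ds2} in Lemma \ref{commutatore_ds}: the term that you would bound by $\|\psi_{xx}\|_{L^\infty}$ is instead handled on the Fourier side by Young's convolution inequality in the form $\ell^{4/3}\ast\ell^{4/3}\subset\ell^{2}$, combined with the embedding $\|\{\widehat f(k)\}\|_{\ell^{p}}\le C_p\|f\|_{H^{1/2}}$ for $p\in\,]1,2]$, giving
\[
\big\|[\langle\partial_x\rangle^{r-1};\phi_x]\psi_{xx}\big\|\le C\left(\|\phi_x\|_{H^{2}}\|\psi_{xx}\|_{H^{1/2}}+\|\phi_{xx}\|_{H^{1}}\|\psi_{xx}\|_{H^{r-2}}\right)
\]
at $r=5/2$, which closes the estimate. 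Apart from this missing case (and the misplaced split at $r=3$), your argument is sound and coincides with the paper's: the reduction to Lemma \ref{lemmapsi0}, the Gronwall step, the treatment of the initial energy and of the source term, and the final approximation argument are all as in the paper.
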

\begin{proof}
We apply $\langle\partial_x\rangle^{r-1}$ to the equation in \eqref{equ2}, multiply by $\langle\partial_x\rangle^{r-1}\psi_t$ and integrate over $\T$. Integrating by parts gives
\begin{equation}
\begin{array}{ll}\label{equpsi01}
\ds \frac12\frac{d}{dt} \left( \|\psi_t\|^2_{H^{r-1}} + \int_\T(\mu -2\phi_x)|\langle\partial_x\rangle^{r-1}\psi_x|^2 dx \right) 
\\
\ds =2\int_\T\phi_{xx}\langle\partial_x\rangle^{r-1}\psi_t \, \langle\partial_x\rangle^{r-1}\psi_x\, dx -\int_\T\phi_{xt}|\langle\partial_x\rangle^{r-1}\psi_x|^2\, dx
\\
\ds -2\int_\T [\langle\partial_x\rangle^{r-1};\phi_x]\psi_{xx}\, \langle\partial_x\rangle^{r-1}\psi_t \, dx
 + \int_\T \langle\partial_x\rangle^{r-1}F \, \langle\partial_x\rangle^{r-1}\psi_t\, dx
 \\
=\ds \sum_{k=1}^4I_k.

\end{array}
\end{equation}
We estimate each term of this sum.

\noindent
{\it Estimate of $I_1$.} We apply the Cauchy-Schwarz inequality, the Sobolev imbedding $H^1\hookrightarrow L^\infty$ and the estimate 
\eqref{stima_hilbert}:
\begin{equation}
\begin{array}{ll}\label{i1}
|I_1 | \le 2 \|\phi_{xx} \|_{L^\infty} \| \langle\partial_x\rangle^{r-1}\psi_t\| \| \langle\partial_x\rangle^{r-1}\psi_x\|  \le C  \|\varphi _x\|_{H^2}\|\psi_t\|_{H^{r-1}}  \|\psi_x\|_{H^{r-1}} .
\end{array}
\end{equation}
{\it Estimate of $I_2$.} In a similar way we obtain
\begin{equation}
\begin{array}{ll}\label{i2}
|I_2| \le  \|\phi_{xt} \|_{L^\infty}  \| \langle\partial_x\rangle^{r-1}\psi_x\|^2
\le  C  \|\varphi _t\|_{H^2} \|\psi_x\|^2_{H^{r-1}} .
\end{array}
\end{equation}
{\it Estimate of $I_3$.} First of all we write 
\begin{equation}
\begin{array}{ll}\label{I30}
|I_3| \le  2\|  [\langle\partial_x\rangle^{r-1};\phi_x]\psi_{xx} \| \, \| \langle\partial_x\rangle^{r-1}\psi_t \| \, .
\end{array}
\end{equation}
For the estimate of the commutator we need to distinguish between the different values of $r$.

i) If $r>5/2$ we apply the estimate \eqref{stima_comm_ds}
 for commutators with $\t=r-1, \sigma=r-2>1/2$ and \eqref{stima_hilbert} to obtain
 \begin{equation}
\begin{array}{ll}\label{I3i}
\|  [\langle\partial_x\rangle^{r-1};\phi_x]\psi_{xx} \| \le  C \left( \|\phi_x\|_{H^{r-1}}  + \|\phi_{xx}\|_{H^{1}}
\right)  \|\psi_{xx}\|_{H^{r-2}} 
 \\
\le  C \left( \|\varphi_x\|_{H^{r-1}} + \|\varphi_{x}\|_{H^{2}} \right) \|\psi_{x}\|_{H^{r-1}} 

\le  C \|\varphi_x\|_{H^{s-1}} \|\psi_{x}\|_{H^{r-1}} .

\end{array}
\end{equation}
ii) If $2\le r<5/2$ we apply \eqref{stima_comm_ds1} with $\t=r-1, \sigma=3-r>1/2$ and \eqref{stima_hilbert} to get
 \begin{equation}
\begin{array}{ll}\label{I3ii}
\|  [\langle\partial_x\rangle^{r-1};\phi_x]\psi_{xx} \| \le  C \left( \|\phi_x\|_{H^{2}}  \|\psi_{xx}\| + \|\phi_{xx}\|_{H^{1}}
 \|\psi_{xx}\|_{H^{r-2}} \right) 
 \\
\le  C  \|\varphi_{x}\|_{H^{2}} \left( \|\psi_{x}\|_{H^{1}}  +  \|\psi_{x}\|_{H^{r-1}}  \right) 

\le  C \|\varphi_x\|_{H^{s-1}} \|\psi_{x}\|_{H^{r-1}} .

\end{array}
\end{equation}
iii) Finally if $ r=5/2$ we apply \eqref{stima_comm_ds2} with $\t=r-1$ and \eqref{stima_hilbert} to obtain
 \begin{equation}
\begin{array}{ll}\label{I3iii}
\|  [\langle\partial_x\rangle^{r-1};\phi_x]\psi_{xx} \| \le  C \left( \|\phi_x\|_{H^{2}}  \|\psi_{xx}\|_{H^{1/2}} + \|\phi_{xx}\|_{H^{1}}
 \|\psi_{xx}\|_{H^{r-2}} \right) 
 \\
\le  C  \|\varphi_{x}\|_{H^{2}}  \|\psi_{x}\|_{H^{3/2}}  

\le  C \|\varphi_x\|_{H^{s-1}} \|\psi_{x}\|_{H^{r-1}} .

\end{array}
\end{equation}
Recalling \eqref{I30}, from \eqref{I3i}--\eqref{I3iii} we have obtained, for all values of $r\ge2$,
\begin{equation}
\begin{array}{ll}\label{i3}
|I_3| \le  C \|\varphi_x\|_{H^{s-1}} \|\psi_{x}\|_{H^{r-1}} \|\psi_t\|_{H^{r-1}}  .
\end{array}
\end{equation}
{\it Estimate of $I_4$.} The Cauchy-Schwarz inequality gives
\begin{equation}
\begin{array}{ll}\label{i4}
|I_4 | \le  \| \langle\partial_x\rangle^{r-1}F \|  \|\langle\partial_x\rangle^{r-1}\psi_t\|
=  \| F\|_{H^{r-1}} \|\psi_t\|_{H^{r-1}} .
\end{array}
\end{equation}
Estimating the right-hand side of \eqref{equpsi01} by \eqref{i1}, \eqref{i2}, \eqref{i3}, \eqref{i4} yields 
\begin{multline}\label{27}
\ds \frac{d}{dt} \left( \|\psi_t\|^2_{H^{r-1}} + \int_\T(\mu -2\phi_x)|\langle\partial_x\rangle^{r-1}\psi_x|^2  dx \right)
\\
\ds \le C \left( \|\varphi_t\|_{H^2} + \|\varphi_x\|_{H^{s-1}}  \right)\left( \|\psi_t\|^2_{H^{r-1}} + \|\psi_x\|^2_{H^{r-1}}  \right) 
\ds    + 2\|F\|_{H^{r-1}} \|\psi_t\|_{H^{r-1}} 
\\
\ds \le C \left(1+ \|\varphi_t\|_{H^2} + \|\varphi_x\|_{H^{s-1}}  \right)\left( \|\psi_t\|^2_{H^{r-1}} + \|\psi_x\|^2_{H^{r-1}}  \right) 
   + \|F\|_{H^{r-1}} ^2,
\\
\ds \le C \left( 1+ \|\varphi_t\|_{H^2} + \|\varphi_x\|_{H^{s-1}}  \right)\left( \|\psi_t\|^2_{H^{r-1}} +\int_\T(\mu -2\phi_x)|\langle\partial_x\rangle^{r-1}\psi_x|^2  dx   \right) 
   + \|F\|_{H^{r-1}} ^2,
\end{multline}
where in the last inequality we have used \eqref{stab2}. Applying the Gronwall lemma to \eqref{27} gives
\begin{multline*}\label{}
 \|\psi_t(t)\|^2_{H^{r-1}}  + \int_\T(\mu -2\phi_x)|\langle\partial_x\rangle^{r-1}\psi_x(t)|^2 dx 
\\
 \le e^{C T\max_{\tau\in I}\left( 1+ \|\varphi_t\|_{H^2} + \|\varphi_x\|_{H^{s-1}}  \right)} \left\{  \|\psi^{(1)}\|^2_{H^{r-1}} + \int_\T(\mu -2\phi_x(0))|\langle\partial_x\rangle^{r-1}\psi_x^{(0)}|^2dx   + \int^t_0\| F\|^2_{H^{r-1}}\, d\tau
\right\},
\end{multline*}
which yields, by \eqref{stab2} and the Poincar\'e inequality,
\begin{multline}\label{apHs}
 \|\psi_t(t)\|^2_{H^{r-1}} + \|\psi(t)\|^2 _{H^{r}} 
\\
 \le Ce^{C T\max_{\tau\in I}\left( 1+ \|\varphi_t\|_{H^2} + \|\varphi\|_{H^{s}}  \right)} \left\{  \|\psi^{(1)}\|^2_{H^{r-1}} + (\mu +2\|\varphi^{(0)}\|_{H^2})\|\psi^{(0)}\|^2_{H^{r}}   + \int^t_0\| F\|^2_{H^{r-1}}\, d\tau
\right\},
\end{multline}
for all $t\in I$, with $C$ also depending on $\d$, that is \eqref{stimapsi01}.
 \eqref{apHs} provides the a priori estimate for $\psi$ in  $ C(I;H^r)\cap C^1(I;H^{r-1})$.
\end{proof}


Now we consider the quadratic operator $\mathcal Q\left[\varphi\right]$, defined in \eqref{termine_nl}. First of all we recall the estimate proved in \cite{M-S-T:ONDE2}.
\begin{proposition}\label{lemma_stima_quadr}
There exists a positive constant $C$ such that for every real $s\ge 1$ and for all $\varphi\in H^s\cap H^3$
\begin{equation}\label{stima_quadr}
\Vert \mathcal Q[\varphi]\Vert_{H^{s-1}}\le C\Vert\varphi_x\Vert_{H^2}\Vert\varphi_x\Vert_{H^{s-1}}\,.
\end{equation}

\end{proposition}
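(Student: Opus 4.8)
The plan is to regard $\mathcal Q[\varphi]$ as a sum of two commutators with the Hilbert transform and to extract from each the one‑derivative smoothing that the commutator structure provides. Writing $\phi=\mathbb H[\varphi]$ and using the triangle inequality,
\[
\|\mathcal Q[\varphi]\|_{H^{s-1}}\le 3\,\|[\mathbb H;\phi_x]\phi_{xx}\|_{H^{s-1}}+\|[\mathbb H;\phi]\phi_{xxx}\|_{H^{s-1}},
\]
so it suffices to bound each term by $C\|\phi\|_{H^3}\|\phi\|_{H^s}$ and then invoke \eqref{stima_hilbert} together with the Poincar\'e inequality \eqref{poincare} to pass from $\phi$ to $\varphi$, namely $\|\phi\|_{H^3}\le\|\varphi\|_{H^3}\le\sqrt2\,\|\varphi_x\|_{H^2}$ and $\|\phi\|_{H^s}\le\|\varphi\|_{H^s}\le\sqrt2\,\|\varphi_x\|_{H^{s-1}}$.

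The key computation is in Fourier variables. For a multiplication operator and the Hilbert transform (symbol $-i\,\mathrm{sgn}$) one has
\[
\widehat{[\mathbb H;f]g}(k)=-i\sum_{\l\in\Z}\bigl(\mathrm{sgn}(k)-\mathrm{sgn}(\l)\bigr)\,\widehat f(k-\l)\,\widehat g(\l).
\]
The crucial point is that $\mathrm{sgn}(k)-\mathrm{sgn}(\l)$ vanishes unless $k$ and $\l$ have opposite signs, in which case $|k-\l|=|k|+|\l|\ge\max\{|k|,|\l|\}$ and $|\mathrm{sgn}(k)-\mathrm{sgn}(\l)|\le2$; thus the output frequency $k$ is always dominated by the frequency $k-\l$ carried by $f$. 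Choosing $(f,g)=(\phi_x,\phi_{xx})$ and $(f,g)=(\phi,\phi_{xxx})$, each term reduces to estimating, with a total of three derivative powers split over the two factors,
\[
\sum_k\langle k\rangle^{2(s-1)}\Bigl(\sum_{\l}|k-\l|^{a}\,|\l|^{3-a}\,|\widehat\phi(k-\l)|\,|\widehat\phi(\l)|\Bigr)^{2},\qquad a\in\{0,1\}.
\]
On the support one has $|\l|\le|k-\l|$ and $\langle k\rangle\le\langle k-\l\rangle$ (here $s\ge1$ is used so that $\langle k\rangle^{s-1}\le\langle k-\l\rangle^{s-1}$), whence the uniform bound $|k-\l|^{a}|\l|^{3-a}\le\langle k-\l\rangle\langle \l\rangle^{2}$ and therefore $\langle k\rangle^{s-1}|k-\l|^{a}|\l|^{3-a}\le\langle k-\l\rangle^{s}\langle \l\rangle^{2}$.

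Setting $a_m:=\langle m\rangle^{s}|\widehat\phi(m)|$ and $b_n:=\langle n\rangle^{2}|\widehat\phi(n)|$, the inner sum is controlled by the convolution $(a*b)(k)$, and Young's inequality $\|a*b\|_{\ell^2}\le\|a\|_{\ell^2}\|b\|_{\ell^1}$ gives $\|[\mathbb H;f]g\|_{H^{s-1}}\le 2\|\phi\|_{H^s}\,\|b\|_{\ell^1}$. Finally, by Cauchy--Schwarz,
\[
\|b\|_{\ell^1}=\sum_n\langle n\rangle^{2}|\widehat\phi(n)|\le C\,\|\phi\|_{H^3},
\]
because $\sum_n\langle n\rangle^{2(2-3)}=\sum_n\langle n\rangle^{-2}<\infty$. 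Summing the two contributions and applying \eqref{stima_hilbert} and \eqref{poincare} as above yields \eqref{stima_quadr}.

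The only delicate point is the derivative bookkeeping. There are exactly three derivative powers to distribute, and the gain $\langle k\rangle\le\langle k-\l\rangle$ produced by the sign cancellation is what lets me keep the low‑regularity factor at the level $b_n=\langle n\rangle^{2}|\widehat\phi(n)|$, which is summable in $\ell^1$ \emph{precisely} because $2+\tfrac12<3$, i.e.\ through the one–dimensional embedding of $H^3$ into the weighted $\ell^1$; this is where the hypothesis $\varphi\in H^3$ (the factor $\|\varphi_x\|_{H^2}$ on the right) enters, while $s\ge1$ is exactly what legitimizes $\langle k\rangle^{s-1}\le\langle k-\l\rangle^{s-1}$. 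An equivalent route, bypassing the explicit Fourier series, would commute the multiplier $\langle\partial_x\rangle^{s-1}$ through $\mathbb H$ and apply the commutator estimates \eqref{stima_comm_ds}--\eqref{stima_comm_ds2}, but the frequency computation above makes the underlying one‑derivative smoothing the most transparent.
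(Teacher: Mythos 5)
Your argument is correct, but it cannot coincide with ``the paper's proof'' in any meaningful sense, because the paper does not prove this proposition at all: its proof is the single line ``For the proof see \cite{M-S-T:ONDE2}''. Your Fourier-side computation is therefore a self-contained substitute, and it is very much in the style the authors use elsewhere: the proof of Lemma \ref{commutatore_ds} in the Appendix follows the identical template (expand the commutator in Fourier coefficients, bound the symbol difference pointwise, split by sub-additivity, close with Young's convolution inequality and the embedding $\Vert\{\vert\widehat f\vert\}\Vert_{\ell^1}\le C\Vert f\Vert_{H^\sigma}$ for $\sigma>1/2$). The only structural difference is that there the symbol is $\langle k\rangle^\tau$, whose difference is estimated by the mean value theorem, while here it is $-i\,\mathrm{sgn}(k)$, whose difference $\mathrm{sgn}(k)-\mathrm{sgn}(\ell)$ is supported where $\vert k-\ell\vert=\vert k\vert+\vert\ell\vert\ge\max\{\vert k\vert,\vert\ell\vert\}$. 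That support property, together with $s\ge1$, is exactly what justifies your key bookkeeping bound $\langle k\rangle^{s-1}\vert k-\ell\vert^{a}\vert\ell\vert^{3-a}\le\langle k-\ell\rangle^{s}\langle\ell\rangle^{2}$ for both $a=0$ and $a=1$ (the case $a=0$ needs $\vert\ell\vert\le\vert k-\ell\vert$, which holds on the support); the remaining steps (Young's inequality $\ell^2\ast\ell^1\to\ell^2$ and Cauchy--Schwarz giving $\sum_n\langle n\rangle^2\vert\widehat\phi(n)\vert\le C\Vert\phi\Vert_{H^3}$) are routine and correct, so each commutator term is bounded by $C\Vert\phi\Vert_{H^s}\Vert\phi\Vert_{H^3}$ as you claim.

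One slip to repair at the very end: the Poincar\'e inequality \eqref{poincare} holds only for zero-mean functions, and the proposition does not assume that $\varphi$ has zero spatial mean, so the chain $\Vert\phi\Vert_{H^3}\le\Vert\varphi\Vert_{H^3}\le\sqrt2\,\Vert\varphi_x\Vert_{H^2}$ is not legitimate as written (its second inequality fails if $\varphi$ has a nonzero constant component). The fix is one line: $\phi=\mathbb H[\varphi]$ automatically has zero mean, since the Hilbert transform annihilates the $k=0$ mode, so apply Poincar\'e to $\phi$ and then \eqref{stima_hilbert} to $\phi_x=\mathbb H[\varphi_x]$, obtaining $\Vert\phi\Vert_{H^3}\le\sqrt2\,\Vert\phi_x\Vert_{H^2}\le\sqrt2\,\Vert\varphi_x\Vert_{H^2}$ and likewise $\Vert\phi\Vert_{H^s}\le\sqrt2\,\Vert\varphi_x\Vert_{H^{s-1}}$. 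With that adjustment your proof is complete.
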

\begin{proof}
For the proof see \cite{M-S-T:ONDE2}.
\end{proof}

Next we give an estimate for the difference of values of $\mathcal Q\left[\varphi\right]$.
\begin{lemma}\label{}
The quadratic operator $\mathcal Q\left[\varphi\right]$, defined in \eqref{termine_nl}, satisfies
\begin{equation}
\begin{array}{ll}\label{diffQ}
\| \mathcal Q\left[\varphi \right] - \mathcal Q\left[\tilde\varphi \right] \| \le C \left( \|\varphi\|_{H^3}+ \|\tilde\varphi\|_{H^3} \right) \|(\varphi-\tilde\varphi)_x\|
\end{array}
\end{equation}
for all functions $\varphi,\tilde\varphi \in H^3.$
\end{lemma}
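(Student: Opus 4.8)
\textit{Proof proposal.} The plan is to exploit that $\mathcal Q$ is a diagonal quadratic expression in $\phi=\HH[\varphi]$, so that the difference linearizes into Hilbert-transform commutators, and to extract the whole gain from the frequency localization of those commutators. Set $d=\varphi-\tilde\varphi$, $w=\HH[d]=\phi-\tilde\phi$, $\phi=\HH[\varphi]$, $\tilde\phi=\HH[\tilde\varphi]$; note $\|w_x\|\le\|d_x\|$, and since $d,w$ have zero mean, the Poincar\'e inequality \eqref{poincare} and $H^1\hookrightarrow L^\infty$ give $\|w\|_{L^\infty}\le C\|w\|_{H^1}\le C\|d_x\|$, while \eqref{stima_hilbert} yields $\|\phi\|_{H^3}\le\|\varphi\|_{H^3}$ and $\|\tilde\phi\|_{H^3}\le\|\tilde\varphi\|_{H^3}$. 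Applying the telescoping identity $[\HH;a]b-[\HH;\tilde a]\tilde b=[\HH;a-\tilde a]b+[\HH;\tilde a](b-\tilde b)$ to each summand of \eqref{termine_nl}, I would first write
\begin{equation*}
\mathcal Q[\varphi]-\mathcal Q[\tilde\varphi]=-3[\HH;w_x]\phi_{xx}-3[\HH;\tilde\phi_x]w_{xx}-[\HH;w]\phi_{xxx}-[\HH;\tilde\phi]w_{xxx},
\end{equation*}
and reduce the lemma to bounding each of these four commutators in $L^2$ by $C(\|\varphi\|_{H^3}+\|\tilde\varphi\|_{H^3})\|d_x\|$.

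The heart of the matter is a single Fourier-side estimate applicable to all four terms at once. Writing a generic term as $[\HH;a]b$, its Fourier coefficient at $\xi$ equals $\sum_{\eta}\big(\widehat{\HH}(\xi)-\widehat{\HH}(\xi-\eta)\big)\widehat a(\eta)\widehat b(\xi-\eta)$ with $\widehat{\HH}(k)=-i\operatorname{sgn}k$, and the factor $\widehat{\HH}(\xi)-\widehat{\HH}(\xi-\eta)$ vanishes unless $\xi$ and $\zeta:=\xi-\eta$ have opposite signs; in that case $|\eta|=|\xi|+|\zeta|$, which gives the crucial localization $|\zeta|\le|\eta|$. In each of the four terms one factor is a regular function $\psi\in\{\phi,\tilde\phi\}$ and the other is $w$; using $|\zeta|\le|\eta|$ I would rebalance the powers of $|\eta|,|\zeta|$ (e.g.\ $|\zeta|^3\le|\eta|^2|\zeta|$) so that in every case the $w$-factor is left with exactly one derivative (weight $|\cdot|$) and the regular factor with two (weight $|\cdot|^2$). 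The Fourier coefficient is then dominated pointwise by a convolution of the sequence $R(\cdot)=|\cdot|\,|\widehat w(\cdot)|\in\ell^2(\Z)$ with the sequence $P(\cdot)=|\cdot|^2|\widehat\psi(\cdot)|\in\ell^1(\Z)$, and Young's inequality gives $\|P\ast R\|_{\ell^2(\Z)}\le\|P\|_{\ell^1(\Z)}\|R\|_{\ell^2(\Z)}$. Here $\|R\|_{\ell^2}=\|w_x\|\le\|d_x\|$, while by Cauchy--Schwarz
\begin{equation*}
\|P\|_{\ell^1}=\sum_{\eta\in\Z}\frac{|\eta|^2}{\langle\eta\rangle^3}\,\langle\eta\rangle^3|\widehat\psi(\eta)|\le\Big(\sum_{\eta\in\Z}\frac{|\eta|^4}{\langle\eta\rangle^6}\Big)^{1/2}\|\psi\|_{H^3}\le C\|\psi\|_{H^3},
\end{equation*}
the series converging since its terms decay like $|\eta|^{-2}$. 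Summing the four contributions yields the claimed bound.

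I expect the decisive step to be the term $[\HH;\tilde\phi]w_{xxx}$. There the difference factor $w$ enters with three derivatives while the target permits only $\|d_x\|$, so the argument must gain two derivatives on $w$; a single Calder\'on-type commutator estimate for $\HH$ gains only one and is therefore insufficient. The mechanism above succeeds precisely because the sign condition forces $|\zeta|\le|\eta|$, allowing $|\zeta|^3\le|\eta|^2|\zeta|$ and hence the passage of two derivatives onto $\tilde\phi$; the extra $H^3$ regularity of $\tilde\phi$ beyond the scaling count is exactly what makes the weight $|\eta|^2\langle\eta\rangle^{-3}$ square-summable on $\Z$. The three milder terms can alternatively be handled by hand, expanding $[\HH;a]b=\HH(ab)-a\,\HH(b)$ and using the $L^2$-boundedness of $\HH$ together with $\|w\|_{L^\infty},\|\phi_{xx}\|_{L^\infty},\|\HH\phi_{xx}\|_{L^\infty}\le C(\|\varphi\|_{H^3}+\|\tilde\varphi\|_{H^3})$, but routing all four through the single convolution estimate is the most economical route.
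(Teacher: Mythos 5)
Your proof is correct, and its first step coincides exactly with the paper's: the paper telescopes $\mathcal Q[\varphi]-\mathcal Q[\tilde\varphi]$ into the same four commutators, $J_1=-3[\HH;\delta\phi_x]\phi_{xx}$, $J_2=-3[\HH;\tilde\phi_x]\delta\phi_{xx}$, $J_3=-[\HH;\delta\phi]\phi_{xxx}$, $J_4=-[\HH;\tilde\phi]\delta\phi_{xxx}$, with $\delta\phi=\HH[\varphi-\tilde\varphi]$. Where you genuinely diverge is in how these commutators are bounded. The paper treats them as applications of ready-made appendix lemmas: \eqref{stima_comm_p} (Lemma \ref{lemma_comm_ale_2}, proved in the companion paper) with $p=0$, $p=2$, $p=3$ for $J_1$, $J_2$, $J_4$, and \eqref{stima_comm_1} (Lemma \ref{lemma_comm}) for $J_3$, combined with \eqref{stima_hilbert} and the Poincar\'e inequality applied to $\delta\phi$. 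You instead prove the required bounds from scratch: the observation that $\operatorname{sgn}\xi-\operatorname{sgn}\zeta$ vanishes unless $\xi$ and $\zeta$ have opposite signs, forcing $|\zeta|\le|\eta|$ on the support of the commutator symbol, is precisely the mechanism hidden inside \eqref{stima_comm_p}; it is what lets two derivatives migrate off $\delta\phi_{xxx}$ in $J_4$, the term you correctly single out as the one where a generic one-derivative commutator gain would fail. Your unified Young-inequality bound --- smooth factor in $\ell^1$, paying $\|\cdot\|_{H^3}$ through Cauchy--Schwarz against the square-summable weight $|\eta|^2\langle\eta\rangle^{-3}$, difference factor in $\ell^2$ as exactly $\|\delta\phi_x\|\le\|(\varphi-\tilde\varphi)_x\|$ --- covers all four terms at once; for $J_2$--$J_4$ the paper's cited lemmas make the opposite assignment (smooth factor such as $\tilde\phi_{xxx}$ in $L^2$, difference factor in $H^1$, whose Fourier coefficients are summable, followed by Poincar\'e), and both assignments yield \eqref{diffQ}. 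What each route buys: the paper's proof is a few lines because the analytic content is outsourced to the companion papers, while yours is self-contained, in effect reproving the $\t=0$ cases of Lemma \ref{lemma_comm_ale_2}, and makes visible why the Hilbert commutator gains derivatives at all. One small inaccuracy: you assert that $d=\varphi-\tilde\varphi$ has zero mean, which is not among the hypotheses of the lemma; this is harmless, since zero mean is needed (for Poincar\'e) only for $w=\HH[d]$, where it holds automatically because $\widehat{\HH[d]}(0)=0$, and your main convolution argument never uses Poincar\'e anyway.
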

\begin{proof}
We denote $\phi=\mathbb H[\varphi]$, $\tilde\phi=\mathbb H[\tilde\varphi]$, $\d\varphi=\varphi-\tilde\varphi$, $\d\phi=\phi-\tilde\phi$. Then we can write
\begin{equation*}
\begin{array}{ll}\label{}
\mathcal Q\left[\varphi \right] - \mathcal Q\left[\tilde\varphi \right] = -3\left[\mathbb H\,;\,\phi_x\right]\phi_{xx}-\left[\mathbb H\,;\,\phi\right]\phi_{xxx}
+3[\mathbb H\,;\,\tilde\phi_x]\tilde\phi_{xx}+[\mathbb H\,;\,\tilde\phi ]\tilde\phi_{xxx}
\\
=-3\left[\mathbb H\,;\,\d\phi_x \right]\phi_{xx}   
-3\left[\mathbb H\,;\,\tilde\phi_x \right]\d\phi_{xx}  -  \left[\mathbb H\,;\,\d\phi\right]\phi_{xxx}  -  [\mathbb H\,;\,\tilde\phi ]\d\phi_{xxx}
\\
=\ds \sum_{k=1}^4J_k.
\end{array}
\end{equation*}
Now we estimate each term of the sum.

\noindent
{\it Estimate of $J_1$.} We apply estimate \eqref{stima_comm_p}, with $p=s=0$, and \eqref{stima_hilbert}:
\begin{equation}
\begin{array}{ll}\label{j1}
\|J_1\| \le C \| \d\phi_x \| \|\phi_{xx} \|_{H^1} \le C \| \d\varphi_x \| \|\varphi \|_{H^3}.
\end{array}
\end{equation}
{\it Estimate of $J_2$.} We apply estimate \eqref{stima_comm_p}, with $p=2, s=0$, \eqref{stima_hilbert} and the Poincar\'e inequality:
\begin{equation}
\begin{array}{ll}\label{j2}
\|J_2\| \le C \| \tilde\phi_{xxx} \| \|\d\phi \|_{H^1} \le C \|\tilde\varphi \|_{H^3} \| \d\varphi_x \| .
\end{array}
\end{equation}
{\it Estimate of $J_3$.} We apply estimate \eqref{stima_comm_1}, with $ s=1$, \eqref{stima_hilbert} and the Poincar\'e inequality:
\begin{equation}
\begin{array}{ll}\label{j3}
\|J_3\| \le C \| \d\phi \|_{H^1} \|\phi_{xxx} \| \le C \|\d\varphi_x \| \| \varphi \|_{H^3} .
\end{array}
\end{equation}
{\it Estimate of $J_4$.} We apply estimate \eqref{stima_comm_p}, with $p=3, s=0$, \eqref{stima_hilbert} and the Poincar\'e inequality:
\begin{equation}
\begin{array}{ll}\label{j4}
\|J_4\| \le C \| \tilde\phi_{xxx} \| \|\d\phi \|_{H^1} \le C \|\tilde\varphi \|_{H^3} \| \d\varphi_x \| .
\end{array}
\end{equation}
Collecting \eqref{j1}--\eqref{j4} gives \eqref{diffQ}.
 
\end{proof}
More generally we apply \eqref{stima_comm_p} with different choices of $p$ to estimate the $H^{s-1}$-norm of $J_1-J_4$ and obtain
\begin{lemma}\label{}
Let $s\ge3$. The quadratic operator $\mathcal Q\left[\varphi\right]$, defined in \eqref{termine_nl}, satisfies
\begin{equation}
\begin{array}{ll}\label{diffQs}
\| \mathcal Q\left[\varphi \right] - \mathcal Q\left[\tilde\varphi \right] \|_{H^{s-1}} \le C \left( \|\varphi\|_{H^s}+ \|\tilde\varphi\|_{H^s} \right) \| \varphi-\tilde\varphi\|_{H^s}
\end{array}
\end{equation}
for all functions $\varphi,\tilde\varphi \in H^s.$
\end{lemma}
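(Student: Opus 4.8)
The plan is to reuse verbatim the four--term splitting that produced the $L^2$ estimate \eqref{diffQ}, only now measuring each piece in $H^{s-1}$ rather than in $L^2$. Writing $\delta\varphi=\varphi-\tilde\varphi$ and $\delta\phi=\mathbb H[\delta\varphi]$, one has $\mathcal Q[\varphi]-\mathcal Q[\tilde\varphi]=\sum_{k=1}^4 J_k$ with
\[
J_1=-3[\mathbb H;\delta\phi_x]\phi_{xx},\quad J_2=-3[\mathbb H;\tilde\phi_x]\delta\phi_{xx},\quad J_3=-[\mathbb H;\delta\phi]\phi_{xxx},\quad J_4=-[\mathbb H;\tilde\phi]\delta\phi_{xxx}.
\]
The difference $\delta\phi$ appears in exactly one factor of each $J_k$, while the companion factor carries $\phi$ (in $J_1,J_3$) or $\tilde\phi$ (in $J_2,J_4$). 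The only new ingredient relative to the previous lemma is to invoke the commutator estimate \eqref{stima_comm_p} at the higher regularity level $H^{s-1}$.

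For each $J_k$ I would apply \eqref{stima_comm_p} with the value of the parameter $p$ that, just as in the $L^2$ bounds \eqref{j1}--\eqref{j4}, redistributes the spatial derivatives so that both the companion factor and the difference factor are measured at most in $H^s$; then \eqref{stima_hilbert} converts $\phi,\tilde\phi,\delta\phi$ back into $\varphi,\tilde\varphi,\delta\varphi$, and the Poincar\'e inequality \eqref{poincare} absorbs the missing derivative whenever the difference enters through $\delta\phi$ rather than $\delta\phi_x$. Concretely, for $J_1$ and $J_3$ the companion factor involves $\varphi$ and the bound takes the shape $C\|\varphi\|_{H^s}\|\varphi-\tilde\varphi\|_{H^s}$, while for $J_2$ and $J_4$ the complementary choice of $p$ gives $C\|\tilde\varphi\|_{H^s}\|\varphi-\tilde\varphi\|_{H^s}$. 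Summing the four contributions yields \eqref{diffQs}.

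The point that has to be checked — the analogue of the order-two cancelation already exploited in \eqref{equ1bis} — is that in every $J_k$ the commutator $[\mathbb H;\cdot]$ supplies precisely the one-derivative gain needed to keep each factor within $H^s$. A naive count assigns three spatial derivatives to $\phi$ (resp.\ $\delta\phi$) in $J_3$ and $J_4$; it is the smoothing of the commutator, encoded in \eqref{stima_comm_p}, that lets these derivatives be shifted so that the companion function remains at $H^3$ (controlled by $\|\cdot\|_{H^3}\le\|\cdot\|_{H^s}$ since $s\ge3$) and the difference is measured in $H^s$. Verifying that the admissible range of $p$ in \eqref{stima_comm_p} indeed covers all the index choices required for every $s\ge3$ is the only delicate bookkeeping, and it is routine once the $L^2$ case \eqref{diffQ} is in hand.
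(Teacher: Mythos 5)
Your overall strategy coincides with the paper's own: the paper gives no written proof of this lemma beyond the sentence preceding it, which says precisely what you propose --- reuse the splitting $\mathcal Q[\varphi]-\mathcal Q[\tilde\varphi]=\sum_{k=1}^4 J_k$ and estimate each term in $H^{s-1}$ via \eqref{stima_comm_p} with suitable choices of $p$, together with \eqref{stima_hilbert}. However, your signposting of the bookkeeping contains a genuine error: you say the derivatives should be redistributed ``just as in the $L^2$ bounds \eqref{j1}--\eqref{j4}'' and that the verification ``is routine once the $L^2$ case \eqref{diffQ} is in hand.'' Taken literally, this fails for half of the terms. In the $L^2$ case the paper shifts the derivatives of the difference onto the smooth companion: $p=2$ for $J_2$ and $p=3$ for $J_4$ (and for $J_3$ it uses \eqref{stima_comm_1}, not \eqref{stima_comm_p} at all), so that the difference is measured only by $\|\delta\phi\|_{H^1}$. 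At the level $\tau=s-1$ the same choices would produce $\|\partial_x^2\tilde\phi_x\|_{H^{s-1}}\sim\|\tilde\varphi\|_{H^{s+1}}$ and $\|\partial_x^3\tilde\phi\|_{H^{s-1}}\sim\|\tilde\varphi\|_{H^{s+2}}$, which cannot be bounded by $\|\tilde\varphi\|_{H^s}$.

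The admissible bookkeeping is the reverse of the $L^2$ one for those terms. In \eqref{stima_comm_p} with $\tau=s-1$, the commutated function $v$ is charged with $\|\partial_x^p v\|_{H^{s-1}}$, so after \eqref{stima_hilbert} it stays within $H^s$ only if the total number of $x$-derivatives falling on it (those it already carries plus $p$) is at most one; all remaining derivatives must therefore sit in the $f$-slot, which is measured only in $H^1$ --- and this is exactly where the hypothesis $s\ge3$ enters, via $\|\delta\phi_{xx}\|_{H^1}\le\|\delta\varphi\|_{H^3}\le\|\delta\varphi\|_{H^s}$. The working choices are: for $J_1$, $v=\delta\phi_x$, $f=\phi_{xx}$, $p=0$; for $J_2$, $v=\tilde\phi_x$, $f=\delta\phi_{xx}$, $p=0$; for $J_3$, $v=\delta\phi$, $f=\phi_{xx}$, $p=1$; for $J_4$, $v=\tilde\phi$, $f=\delta\phi_{xx}$, $p=1$. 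Each term is then bounded by $C\|\cdot\|_{H^s}\|\cdot\|_{H^3}$ with the $H^s$ norm on the factor carrying at most one derivative, which yields \eqref{diffQs} since $\|\cdot\|_{H^3}\le\|\cdot\|_{H^s}$. Note that $p=1$ is forced for $J_3$ and $J_4$: taking $p=0$ there would require $\|\varphi\|_{H^4}$ or $\|\delta\varphi\|_{H^4}$, which is unavailable when $s=3$. With this corrected choice of parameters (and with no need for the Poincar\'e inequality, since the right-hand side of \eqref{diffQs} carries the full $H^s$ norm of the difference) your plan goes through and is the paper's intended proof.
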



\section{Proof of Theorem \ref{main}}\label{proof main}

By assumption the sequence $\{\varphi^{(0)}_n\}_{n\in\N}$ is uniformly bounded in $H^s$, and the sequence $\{\varphi^{(1)}_n\}_{n\in\N}$ is uniformly bounded in $H^{s-1}$. Because of the uniform a priori estimate \eqref{stima_Hs},
from now on we may assume that the sequence of solutions $\{\varphi_n\}_{n\in\N}$ is uniformly bounded in $C(I;H^s)\cap C^1(I;H^{s-1})$ on the common time interval $I=[0,T]$, i.e. there exists $K>0$ such that
\begin{equation}
\begin{array}{ll}\label{stimapsin}
\|\varphi_n(t)\|_{H^s}^2+\|(\varphi_n)_t(t)\|_{H^{s-1}}^2\le C_2\left( \|\varphi_n^{(0)}\|_{H^s}^2+\|\varphi_n^{(1)}\|_{H^{s-1}}^2 \right) \le K, \qquad \forall t\in I, \, \forall n.
\end{array}
\end{equation}
Notice that the similar bound holds as well for the solution $\varphi$.

From now on $T$ will be usually included in the generic constant $C$, but sometimes not, when we prefer to emphasize its presence.


In the next proposition we prove the convergence of $\{\varphi_n\}_{n\in\N}$ in a weaker topology than in our main Theorem \ref{main}.
\begin{proposition}\label{convs-1}
Let $s\ge3$. Under the assumptions of Theorem \ref{main} the sequence of solutions $\{\varphi_n\}_{n\in\N}$ converges to $\varphi$ strongly in $C(I;H^{s-1})\cap C^1(I;H^{s-2})$.
\end{proposition}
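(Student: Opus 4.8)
The plan is to set $\psi_n:=\varphi_n-\varphi$ and to derive for it a linear wave-type equation of the form \eqref{equ2} whose variable coefficient is built from the \emph{fixed} limit solution $\varphi$, so that the uniform a priori estimate of Lemma \ref{lemmapsi01} applies with constants independent of $n$. Writing both $\varphi_n$ and $\varphi$ in the form \eqref{equ1ter} and subtracting, the undifferentiated principal parts combine, after adding and subtracting $(\mu-2\phi_x)(\varphi_n)_{xx}$, as
\[
(\mu-2(\phi_n)_x)(\varphi_n)_{xx}-(\mu-2\phi_x)\varphi_{xx}=(\mu-2\phi_x)(\psi_n)_{xx}-2(\mathbb H[\psi_n])_x(\varphi_n)_{xx},
\]
where $\phi_n=\mathbb H[\varphi_n]$, $\phi=\mathbb H[\varphi]$ and we used $\phi_n-\phi=\mathbb H[\psi_n]$. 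Hence $\psi_n$ solves
\[
(\psi_n)_{tt}-(\mu-2\phi_x)(\psi_n)_{xx}=F_n,\qquad F_n:=-2(\mathbb H[\psi_n])_x(\varphi_n)_{xx}-\bigl(\mathcal Q[\varphi_n]-\mathcal Q[\varphi]\bigr),
\]
with zero-mean data $\psi_n^{(0)}=\varphi_n^{(0)}-\varphi^{(0)}$, $\psi_n^{(1)}=\varphi_n^{(1)}-\varphi^{(1)}$. The crucial point is that the coefficient $\mu-2\phi_x$ is the one attached to $\varphi$, which satisfies the stability bound \eqref{stab1} and has regularity $\varphi\in C(I;H^s)\cap C^1(I;H^{s-1})$; thus the hypotheses of Lemma \ref{lemmapsi01} hold with $r=s-1$ and coefficient regularity $s$ (note $s-1\ge2$ and $s\ge\max\{3,s-1\}$), and the exponential prefactor in \eqref{stimapsi01} depends only on $\varphi,T,\delta,\mu$, not on $n$.

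Applying Lemma \ref{lemmapsi01} with $r=s-1$ to $\psi_n$ gives, for all $t\in I$,
\[
\|\psi_n(t)\|^2_{H^{s-1}}+\|(\psi_n)_t(t)\|^2_{H^{s-2}}\le C_0\Bigl\{a_n+\int_0^t\|F_n\|^2_{H^{s-2}}\,d\tau\Bigr\},
\]
where $a_n:=\|\psi_n^{(1)}\|^2_{H^{s-2}}+(\mu+2\|\varphi^{(0)}\|_{H^2})\|\psi_n^{(0)}\|^2_{H^{s-1}}$ and $C_0=C_0(\varphi,T,\delta,\mu)$. The decisive step is the uniform forcing estimate $\|F_n(\tau)\|_{H^{s-2}}\le C_K\|\psi_n(\tau)\|_{H^{s-1}}$, with $C_K$ controlled by the common bound $K$ of \eqref{stimapsin}. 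For the product term I would use that $H^{s-2}$ is a multiplicative algebra (since $s-2\ge1>1/2$), together with $\|(\mathbb H[\psi_n])_x\|_{H^{s-2}}\le\|\psi_n\|_{H^{s-1}}$ by \eqref{stima_hilbert} and $\|(\varphi_n)_{xx}\|_{H^{s-2}}\le\|\varphi_n\|_{H^s}\le\sqrt K$. For the $\mathcal Q$-difference I would invoke \eqref{diffQs} with $s$ replaced by $s-1$ when $s\ge4$, which gives $\|\mathcal Q[\varphi_n]-\mathcal Q[\varphi]\|_{H^{s-2}}\le C(\|\varphi_n\|_{H^{s-1}}+\|\varphi\|_{H^{s-1}})\|\psi_n\|_{H^{s-1}}\le C\sqrt K\,\|\psi_n\|_{H^{s-1}}$; in the borderline case $s=3$ one needs instead the bound $\|\mathcal Q[\varphi_n]-\mathcal Q[\varphi]\|_{H^1}\le C(\|\varphi_n\|_{H^3}+\|\varphi\|_{H^3})\|(\psi_n)_x\|_{H^1}$, proved by the same commutator estimates that yield \eqref{diffQ}, carried out at one higher Sobolev level.

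Setting $y_n(t):=\|\psi_n(t)\|^2_{H^{s-1}}+\|(\psi_n)_t(t)\|^2_{H^{s-2}}$ and inserting $\|F_n\|^2_{H^{s-2}}\le C_K^2\,y_n$ into the displayed estimate yields $y_n(t)\le C_0a_n+C_0C_K^2\int_0^t y_n\,d\tau$, so that Gronwall's lemma gives $\sup_{t\in I}y_n(t)\le C_0a_n e^{C_0C_K^2T}$. Since $\|\psi_n^{(0)}\|_{H^{s-1}}\le\|\varphi_n^{(0)}-\varphi^{(0)}\|_{H^s}\to0$ and $\|\psi_n^{(1)}\|_{H^{s-2}}\le\|\varphi_n^{(1)}-\varphi^{(1)}\|_{H^{s-1}}\to0$ by hypothesis, we have $a_n\to0$, whence $y_n\to0$ uniformly on $I$, i.e. $\varphi_n\to\varphi$ in $C(I;H^{s-1})\cap C^1(I;H^{s-2})$.

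I expect the main obstacle to be precisely the forcing estimate, and within it the $\mathcal Q$-difference: one must control it in $H^{s-2}$ by the $H^{s-1}$-norm of $\psi_n$, that is, one derivative below the natural estimate \eqref{diffQs}. This gain of one derivative is exactly what makes the weaker topology accessible and lets the argument close by a single Gronwall step. At the full regularity of Theorem \ref{main} the same term is only controllable by $\|\psi_n\|_{H^s}$, which cannot be absorbed into the $H^s$-energy, and this is what necessitates the triangularization device in the proof of the main theorem.
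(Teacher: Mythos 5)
Your proposal is correct in substance, but it follows a genuinely different route from the paper's. Both arguments start from the same difference equation — the coefficient $\mu-2\phi_x$ frozen at the limit solution $\varphi$, forcing $F_n=-2(\mathbb H[\psi_n])_x(\varphi_n)_{xx}-\bigl(\mathcal Q[\varphi_n]-\mathcal Q[\varphi]\bigr)$ — but from there the paper works at the \emph{lowest} energy level: it applies Lemma \ref{lemmapsi0}, bounds $\|F_n\|$ in $L^2$ by $C\|(\psi_n)_x\|$ using only \eqref{diffQ}, closes with Gronwall to get convergence in $C(I;H^1)\cap C^1(I;L^2)$, and then upgrades to $C(I;H^{s-1})\cap C^1(I;H^{s-2})$ by interpolating between $H^1$ and $H^s$ (resp.\ $L^2$ and $H^{s-1}$) against the uniform bound \eqref{stimapsin}. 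You instead run the energy estimate directly at level $r=s-1$ via Lemma \ref{lemmapsi01}, which requires the shifted difference estimate $\|\mathcal Q[\varphi_n]-\mathcal Q[\varphi]\|_{H^{s-2}}\le C_K\|\psi_n\|_{H^{s-1}}$, an estimate that is \emph{not} stated anywhere in the paper. The trade-off: the paper's route reuses only already-proved ingredients (Lemma \ref{lemmapsi0} and \eqref{diffQ}) at the price of a H\"older-type rate $O\bigl(\,\cdot^{\,1/(s-1)}\bigr)$ coming from interpolation; your route avoids interpolation and yields a Lipschitz bound for $\|\varphi_n-\varphi\|_{C(I;H^{s-1})}+\|(\varphi_n-\varphi)_t\|_{C(I;H^{s-2})}$ in terms of $\|\varphi_n^{(0)}-\varphi^{(0)}\|_{H^{s-1}}+\|\varphi_n^{(1)}-\varphi^{(1)}\|_{H^{s-2}}$, at the price of proving that new commutator estimate. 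Your closing remark — that at the full level $s$ the forcing is only controllable by $\|\psi_n\|_{H^s}$, which cannot be absorbed and forces the triangularization device — is exactly the structural point of the paper.

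One loose end to fix: your case analysis for the shifted $\mathcal Q$-estimate covers $s\ge4$ (via \eqref{diffQs} at level $s-1$) and $s=3$, but $s$ is an arbitrary \emph{real} number $\ge3$, so the range $3<s<4$ is not addressed. The repair is immediate, because the argument you sketch for $s=3$ works verbatim at level $H^{s-2}$ for every real $s\ge3$: writing $\mathcal Q[\varphi_n]-\mathcal Q[\varphi]$ as the four commutator terms $J_1,\dots,J_4$ of the paper's proof of \eqref{diffQ}, and applying \eqref{stima_comm_p} with $\tau=s-2$ and $p=0,1,1,2$ respectively — always keeping $\phi_{xx}$ or $(\mathbb H[\psi_n])_x$ as the $H^1$-factor — together with \eqref{stima_hilbert} and \eqref{poincare}, one gets $\|\mathcal Q[\varphi_n]-\mathcal Q[\varphi]\|_{H^{s-2}}\le C\bigl(\|\varphi\|_{H^s}+\|\varphi_n\|_{H^s}\bigr)\|\varphi_n-\varphi\|_{H^{s-1}}$ uniformly in $s\ge3$ (here one uses $\|\psi_n\|_{H^2}\le\|\psi_n\|_{H^{s-1}}$, valid since $s-1\ge2$). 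With this single estimate the case split can be dropped altogether, and your proof is complete.
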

\begin{proof}
We take the difference of equation \eqref{equ1ter} for $\varphi$ and $\varphi_n$ and get \[
(\varphi-\varphi_n)_{tt} -\left(\mu-2\phi_x\right)(\varphi-\varphi_n)_{xx}= - \mathcal Q\left[\varphi\right]  + \mathcal Q\left[\varphi_n \right] +
2\left(\phi_{n,x}-\phi_{x}\right)\varphi_{n,xx}\, ,
\]
(where $\phi_{n,x}=(\phi_n)_{x}, \varphi_{n,xx}=(\varphi_n)_{xx}$)
which has the form of \eqref{equ2} with
\[
\psi =\varphi-\varphi_n,
\]
\[
F=- \mathcal Q\left[\varphi\right] + \mathcal Q\left[\varphi_n \right]  +
2\left(\phi_{n,x}-\phi_{x}\right)\varphi_{n,xx}.
\] 
Applying \eqref{diffQ} gives
\begin{equation}
\begin{array}{ll}\label{f1}
\| \mathcal Q\left[\varphi \right] - \mathcal Q\left[\varphi_n \right] \| \le C  \left( \|\varphi\|_{H^3}+ \|\varphi_n\|_{H^3} \right) \|\psi_x\|
.
\end{array}
\end{equation}
We also have
\begin{equation}
\begin{array}{ll}\label{f2}
\| 2\left( \phi_{n,x}-\phi_{x}\right)\varphi_{n,xx} \| \le 2 \| \phi_{n,x}-\phi_{x} \| \| \varphi_{n,xx} \|_{L^\infty} \le C\|\psi_x\| \|\varphi_n \|_{H^3}.
\end{array}
\end{equation}
Thus, from \eqref{f1}, \eqref{f2} we obtain
\begin{equation}
\begin{array}{ll}\label{stimaF}
\| F\| \le C  \left( \|\varphi\|_{H^3}+ \|\varphi_n\|_{H^3} \right) \|\psi_x\|
.
\end{array}
\end{equation}
From Lemma \ref{lemmapsi0}, \eqref{stimadeltapsix}, \eqref{stimaF} we get
\begin{equation}\label{stimapsi1}
\ds \frac{d}{dt} \Ec
\le C \left( \|\varphi_t\|_{H^2} +\|\varphi\|_{H^3}  \right)\Ec + C  \left( \|\varphi\|_{H^3}+ \|\varphi_n\|_{H^3} \right) \|\psi_x\|
\ds \le C \Ec ,
\end{equation}
where we have used the uniform boundedness \eqref{stimapsin} for $\varphi_n$ and $\varphi$.
Applying Gronwall's lemma to \eqref{stimapsi1} and using \eqref{stab1} again yields
\[
\|\psi_t(t)\|^2 + \|\psi_x(t)\|^2   \le Ce^{CT}\left( \|\psi_t(0)\|^2 + \|\psi_x(0)\|^2  \right)  \qquad t\in I,
\]
that is
\[
 \|(\varphi-\varphi_n)_t(t)\|^2 + \|(\varphi-\varphi_n)_x(t)\|^2   \le Ce^{CT}\left( \|\varphi^{(1)}-\varphi^{(1)}_n\|^2 + \|(\varphi^{(0)}-\varphi^{(0)}_n)_x\|^2  \right)  \qquad t\in I,
\]
which gives the strong convergence of $\varphi_n$ to $\varphi$ in $C(I;H^1)\cap C^1(I;L^2)$, when passing to the limit as $n\to+\infty$. Recall that, since we are working with functions with zero spatial mean, the Poincar\'e inequality \eqref{poincare} holds. By interpolation and the uniform boundedness \eqref{stimapsin} we get
\begin{multline*}
 \|(\varphi-\varphi_n)_t(t)\|^2_{H^{s-2}} + \|(\varphi-\varphi_n)(t)\|^2_{H^{s-1}}
 \\
  \le C \left( \|(\varphi-\varphi_n)_t(t)\|_{H^{s-1}}^{1-1/(s-1)} \|(\varphi-\varphi_n)_t(t)\|_{L^2}^{1/(s-1)} + \|(\varphi-\varphi_n)(t)\|_{H^{s}}^{1-1/(s-1)} \|(\varphi-\varphi_n)(t)\|_{H^1}^{1/(s-1)} \right)^2
  \\
  \le C \left( \|(\varphi-\varphi_n)_t(t)\|_{L^2}^{2/(s-1)} +  \|(\varphi-\varphi_n)(t)\|_{H^1}^{2/(s-1)} \right), \qquad t\in I.
\end{multline*}
Finally, passing to the limit as $n\to+\infty$ in the above inequality gives the thesis.
\end{proof}


\begin{remark}\label{}
Obviously, by a similar argument with a finer interpolation we could prove the strong convergence of $\varphi_n$ to $\varphi$ in $C(I;H^{s-\eps})\cap C^1(I;H^{s-1-\eps})$, for all small enough $\eps>0$. However, this is useless for the following argument.
\end{remark}

Now we take one spatial derivative of \eqref{equ1ter} and get 
\[
(\varphi_x)_{tt} -\left(\mu-2\phi_x\right)(\varphi_x)_{xx}= - \mathcal Q\left[\varphi\right]_x  -
2 \phi_{xx}\varphi_{xx}\, ,
\]
which has the form of \eqref{equ2} with
\[
\psi =\varphi_x,
\qquad
F= - \mathcal Q\left[\varphi\right]_x  -
2 \phi_{xx}\varphi_{xx}.
\] 
Using Proposition \ref{lemma_stima_quadr}, a Moser-type estimate, the Sobolev imbedding and \eqref{stima_hilbert} we compute 
\begin{equation}
\begin{array}{ll}\label{}
\|F\|_{H^{s-2}} \le \| \mathcal Q\left[\varphi\right]\|_{H^{s-1}} + 2 \|\phi_{xx}\varphi_{xx}\|_{H^{s-2}}
\\
\le C \left(\|\varphi_x\|_{H^{2}}\|\varphi_x\|_{H^{s-1}} +  \|\phi_{xx}\|_{L^\infty}\|\varphi_{xx}\|_{H^{s-2}} +  \|\phi_{xx}\|_{H^{s-2}} \|\varphi_{xx}\|_{L^\infty} \right)  
\\
\le C \|\varphi\|_{H^{3}}\|\varphi\|_{H^{s}}  ,

\end{array}
\end{equation}
and we deduce that $F\in L^\infty(I;H^{s-2})$. Moreover, the initial values of $\varphi_x,\, (\varphi_x)_t$ are $\varphi_{x}^{(0)} \in H^{s-1},\, \varphi_{x}^{(1)}\in H^{s-2}$, respectively.

We are going to introduce regularized approximations of the data $\varphi_{x}^{(0)},\, \varphi_{x}^{(1)},\, F$.
Given any $\eps>0$, let us take functions $ \Psi^{(0)}_\eps\in H^{s}, \Psi^{(1)}_\eps\in H^{s-1}$ with zero mean, and $F^\eps \in L^2(I;H^{s-1})$ such that
\begin{equation}
\begin{array}{ll}\label{stimaeps}
\| \Psi^{(0)}_\eps - \varphi_{x}^{(0)}\|_{H^{s-1}} + \| \Psi^{(1)}_\eps - \varphi_{x}^{(1)}\|_{H^{s-2}} + \|F^\eps - F \|_{L^2(I;H^{s-2})} \le \eps.
\end{array}
\end{equation}

Let us consider the Cauchy problem with regularized data
\begin{equation}\label{equ3}
\begin{cases}
\Psi^\eps_{tt}-\left(\mu-2\phi_x\right)\Psi^\eps_{xx}=F^\eps,   &\qquad t\in I, \; x\in \T,
\\
\Psi^\eps_{|t=0}=\Psi^{(0)}_\eps, \quad \p_t\Psi^\eps_{|t=0}=\Psi^{(1)}_\eps&\qquad  x\in \T.
\end{cases}
\end{equation}
Again this problem has the form \eqref{equ2}.


\begin{lemma}\label{}
Let $s\ge3$. For any $\eps>0$, the Cauchy problem \eqref{equ3} has a unique solution $\Psi^\eps\in C(I;H^{s})\cap C^1(I;H^{s-1})$ and
\begin{multline}\label{stimaPsi}
 \|\Psi^\eps \|^2_{C(I;H^{s})}  +  \|\Psi^\eps _t\|^2_{C(I;H^{s-1})} 
\\
 \le Ce^{C T\max_{\tau\in I}\left( 1+ \|\varphi_t\|_{H^2} + \|\varphi\|_{H^{s}}  \right)}  \left\{  \|\Psi^{(1)}_\eps\|^2_{H^{s-1}} + (\mu +2\|\varphi^{(0)}\|_{H^2})\|\Psi^{(0)}_\eps\|^2_{H^{s}}   + \|F^\eps  \|_{L^2(I;H^{s-1})}^2
 \right\}.
\end{multline}
\end{lemma}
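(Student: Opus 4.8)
The plan is to recognize that problem \eqref{equ3} is exactly of the form \eqref{equ2}, with $\psi = \Psi^\eps$, data $\psi^{(0)} = \Psi^{(0)}_\eps$, $\psi^{(1)} = \Psi^{(1)}_\eps$, and forcing $F = F^\eps$, so that both existence/uniqueness and the energy estimate follow directly from Lemma \ref{lemmapsi01} with the particular choice $r = s$. First I would check that all hypotheses of Lemma \ref{lemmapsi01} are met for this choice. The requirement $r \ge 2$ and $s \ge \max\{3,r\}$ reduces, for $r = s$, to $s \ge 3$, which is assumed. The coefficient function $\varphi$ here is the fixed solution of the original problem appearing in Theorem \ref{main}, which lies in $C(I;H^s) \cap C^1(I;H^{s-1})$; since $s-1 \ge 2$ we have in particular $\varphi \in C(I;H^s) \cap C^1(I;H^2)$, and \eqref{stab1} is precisely the stability bound \eqref{stab2}. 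Finally, the regularized data satisfy $\Psi^{(0)}_\eps \in H^s$, $\Psi^{(1)}_\eps \in H^{s-1}$ with zero mean and $F^\eps \in L^2(I;H^{s-1})$ by construction in \eqref{stimaeps}.

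With the hypotheses verified, Lemma \ref{lemmapsi01} produces a unique zero-mean solution $\Psi^\eps \in C(I;H^s) \cap C^1(I;H^{s-1})$ together with the pointwise-in-time estimate \eqref{stimapsi01} (with $r = s$, $\psi = \Psi^\eps$). The only remaining step is to pass from that pointwise bound to the $C(I;\cdot)$-norms appearing on the left of \eqref{stimaPsi}. Since the exponential prefactor on the right of \eqref{stimapsi01} is independent of $t$, and the forcing term satisfies $\|F^\eps\|_{L^2(0,t;H^{s-1})} \le \|F^\eps\|_{L^2(I;H^{s-1})}$ for every $t \in I$, the right-hand side of \eqref{stimapsi01} can be replaced by a $t$-independent quantity that coincides with the right-hand side of \eqref{stimaPsi}. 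Taking the supremum over $t \in I$ on the left then yields \eqref{stimaPsi}.

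There is essentially no genuine obstacle here: the statement is an immediate corollary of Lemma \ref{lemmapsi01}, and the substance of the argument lies entirely in the a priori estimate already established there. The only points requiring a moment's attention are the bookkeeping of hypotheses---in particular the elementary embedding $C^1(I;H^{s-1}) \hookrightarrow C^1(I;H^2)$ that lets the coefficient $\varphi$ qualify, using $s \ge 3$---and the harmless replacement of the time-dependent $L^2(0,t;H^{s-1})$ forcing norm by its value on the whole interval $I$ before taking the supremum in $t$.
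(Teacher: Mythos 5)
Your proposal is correct and follows exactly the paper's own argument: the paper's proof is literally the one-line invocation of Lemma \ref{lemmapsi01} with $r=s$, which is what you do, with the hypothesis-checking (the embedding $C^1(I;H^{s-1})\hookrightarrow C^1(I;H^{2})$ for $s\ge3$, the stability bound \eqref{stab1} playing the role of \eqref{stab2}, and the passage from the pointwise-in-$t$ estimate \eqref{stimapsi01} to the $C(I;\cdot)$-norms) spelled out more explicitly than the paper bothers to.
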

\begin{proof}
The result follows from Lemma \ref{lemmapsi01} with $r=s$.
\end{proof}
Now we estimate the difference $\Psi^\eps - \varphi_{x}$, which solves the linear problem
\begin{equation}
\begin{cases}\label{probdiff}
(\Psi^\eps - \varphi_{x})_{tt}-\left(\mu-2\phi_x\right)(\Psi^\eps - \varphi_{x})_{xx}=F^\eps-F,   &\qquad t\in I, \; x\in \T,
\\
(\Psi^\eps - \varphi_{x})_{|t=0}=\Psi^{(0)}_\eps- \varphi_{x}^{(0)}, \quad \p_t(\Psi^\eps - \varphi_{x})_{|t=0}=\Psi^{(1)}_\eps- \varphi_{x}^{(1)}&\qquad  x\in \T.
\end{cases}
\end{equation}
\begin{lemma}\label{}
Let $s\ge3$. For any $\eps>0$, the difference $\Psi^\eps - \varphi_{x}$ satisfies the estimate
\begin{equation}
\begin{array}{ll}\label{stimadiff}
\ds \|(\Psi^\eps  - \varphi_{x})(t) \|_{H^{s-1}} +  \| (\Psi^\eps  - \varphi_{x})_t(t) \|_{H^{s-2}} \le C\eps \qquad \forall t\in I.
\end{array}
\end{equation}

\end{lemma}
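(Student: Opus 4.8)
The plan is to recognize the difference problem \eqref{probdiff} as an instance of the linear Cauchy problem \eqref{equ2} and to apply Lemma \ref{lemmapsi01} directly, with the regularity index chosen one below the top, namely $r=s-1$. First I would observe that, since both $\Psi^\eps$ and $\varphi_x$ have zero spatial mean, their difference $\psi:=\Psi^\eps-\varphi_{x}$ is a zero-mean solution of \eqref{probdiff}, hence \emph{the} unique zero-mean solution covered by Lemma \ref{lemmapsi01}. Its data are $\psi^{(0)}=\Psi^{(0)}_\eps-\varphi_{x}^{(0)}\in H^{s-1}$ and $\psi^{(1)}=\Psi^{(1)}_\eps-\varphi_{x}^{(1)}\in H^{s-2}$, and its forcing is $F^\eps-F\in L^2(I;H^{s-2})$. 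With $r=s-1$ the hypotheses of Lemma \ref{lemmapsi01} are met: $r\ge2$ is equivalent to $s\ge3$, and $s\ge\max\{3,s-1\}$ holds trivially; moreover $\varphi\in C(I;H^s)\cap C^1(I;H^{s-1})\subset C(I;H^s)\cap C^1(I;H^{2})$ because $s-1\ge2$, so the coefficient $\mu-2\phi_x$ has the required regularity and satisfies \eqref{stab2} thanks to \eqref{stab1}.

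Next I would invoke the energy estimate \eqref{stimapsi01} with $r=s-1$, which bounds $\|\psi(t)\|^2_{H^{s-1}}+\|\psi_t(t)\|^2_{H^{s-2}}$ by an exponential prefactor times the sum of the squared norms of $\psi^{(1)}$, $\psi^{(0)}$ and of the forcing $F^\eps-F$. The crucial point is that every constant on the right-hand side is independent of $\eps$: the exponential prefactor depends only on $T$ and on $\max_{\tau\in I}\left(1+\|\varphi_t\|_{H^2}+\|\varphi\|_{H^{s}}\right)$, which is finite and fixed because $\varphi$ is the \emph{given} solution whose norms are controlled uniformly by \eqref{stimapsin}; likewise $\mu+2\|\varphi^{(0)}\|_{H^2}$ is a fixed constant. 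Feeding in the regularization bound \eqref{stimaeps}, each of the three data terms is at most $\eps^2$ (using $\|F^\eps-F\|_{L^2(0,t;H^{s-2})}\le\|F^\eps-F\|_{L^2(I;H^{s-2})}\le\eps$), so the right-hand side is at most $C\eps^2$, uniformly for $t\in I$.

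Finally, taking square roots and using $(a+b)\le\sqrt2\,(a^2+b^2)^{1/2}$ gives $\|\psi(t)\|_{H^{s-1}}+\|\psi_t(t)\|_{H^{s-2}}\le C\eps$ for all $t\in I$, which is exactly \eqref{stimadiff}.

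Since the argument is a direct application of the already-established linear estimate of Lemma \ref{lemmapsi01}, there is no genuine analytic obstacle here; the only points requiring care are the bookkeeping of the Sobolev indices under the choice $r=s-1$ (so that $\Psi^{(0)}_\eps\in H^{s}$ and $\varphi_{x}^{(0)}\in H^{s-1}$ do land in the same space $H^{s-1}$, and similarly for the time derivative and the forcing), and the verification that the prefactors in \eqref{stimapsi01} are independent of $\eps$. It is precisely this uniformity that converts the $O(\eps)$ control of the data and forcing provided by \eqref{stimaeps} into $O(\eps)$ control of $\Psi^\eps-\varphi_x$ in the strong norm of $C(I;H^{s-1})\cap C^1(I;H^{s-2})$.
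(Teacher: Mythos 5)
Your proposal is correct and follows exactly the paper's own argument: apply Lemma \ref{lemmapsi01} with $r=s-1$ to the difference problem \eqref{probdiff}, observe that the prefactors depend only on the fixed solution $\varphi$ (hence are independent of $\eps$), and conclude via the approximation bound \eqref{stimaeps}. The extra bookkeeping you supply (checking $r\ge2$, the zero-mean condition, and the membership of the data and forcing in the required spaces) is implicit in the paper but entirely consistent with it.
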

\begin{proof}
We apply Lemma \ref{lemmapsi01} with $r=s-1$ and get
\begin{multline}\label{stimadiff2}
 \|\Psi^\eps  - \varphi_{x} \|^2_{C(I;H^{s-1})} +  \|(\Psi^\eps  - \varphi_{x})_t \|^2_{C(I;H^{s-2})} 
\\
 \le Ce^{C T\max_{\tau\in I}\left( 1+ \|\varphi_t\|_{H^2} + \|\varphi\|_{H^{s}}  \right)}  \Big\{  \| \Psi^{(1)}_\eps- \varphi_{x}^{(1)}\|^2_{H^{s-2}} 
 \\
 + (\mu +2\|\varphi^{(0)}\|_{H^2})\| \Psi^{(0)}_\eps- \varphi_{x}^{(0)} \|^2_{H^{s-1}}   +  \|F^\eps-F  \|_{L^2(I;H^{s-2})}^2
\Big\}.
\end{multline}
Thus \eqref{stimadiff} follows from \eqref{stimaeps}.
\end{proof}

Finally we estimate the difference between $\varphi_{n,x}=(\varphi_n)_{x}$ and $\Psi^\eps$.
The difference of the corresponding problems reads
\begin{equation}
\begin{cases}\label{probdiff2}
(\Psi^\eps - \varphi_{n,x})_{tt}-\left(\mu-2\phi_{n,x}\right)(\Psi^\eps - \varphi_{n,x})_{xx}=G^{n,\eps},   &\qquad t\in I, \; x\in \T,
\\
(\Psi^\eps - \varphi_{n,x})_{|t=0}=\Psi^{(0)}_\eps- \varphi_{n,x}^{(0)}, \\ \p_t(\Psi^\eps - \varphi_{n,x})_{|t=0}=\Psi^{(1)}_\eps- \varphi_{n,x}^{(1)}&\qquad  x\in \T.
\end{cases}
\end{equation}
where we have set
\begin{equation}
\begin{array}{ll}\label{defGneps}

G^{n,\eps} =F^\eps-F^n + 2\left(\phi_{n,x}-\phi_{x}\right)\Psi^\eps_{xx}, \qquad F^n= - \mathcal Q\left[\varphi_n\right]_x  -
2 \phi_{n,xx}\varphi_{n,xx},
\end{array}
\end{equation}
\[
\varphi_{n,x}^{(0)}=(\varphi_{n}^{(0)})_x, \qquad \varphi_{n,x}^{(1)}=(\varphi_{n}^{(1)})_x .
\]
\begin{lemma}\label{}
Let $s\ge3$. For any $\eps>0$, there exists $M(\eps)>0$ such that, for any $n$ the difference $\Psi^\eps - \varphi_{n,x}$ satisfies the estimate
\begin{multline}\label{stimadiff6}
 \|(\Psi^\eps  - \varphi_{n,x})(t) \|^2_{H^{s-1}} +  \|(\Psi^\eps  - \varphi_{n,x})_t(t) \|^2_{H^{s-2}} 
\\
 \le C \Big\{ \eps^2 + \| \varphi^{(1)}- \varphi_{n}^{(1)}\|^2_{H^{s-1}} 
 + \| \varphi^{(0)} - \varphi_{n}^{(0)} \|^2_{H^{s}}   +  \left| \int_0^t\|\varphi-\varphi_n\|_{H^{s}}^2  d\tau \right| + TM(\eps)\|\varphi_n -\varphi\|^2_{C(I;H^{s-1})}
\Big\}
\end{multline}
for any $t\in I$.
\end{lemma}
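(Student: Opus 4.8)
The plan is to recognize that the difference problem \eqref{probdiff2} is again of the form \eqref{equ2}, but now with the coefficient $\mu-2\phi_{n,x}$ built from $\varphi_n$ in place of $\varphi$, unknown $\psi=\Psi^\eps-\varphi_{n,x}$, forcing $G^{n,\eps}$, and initial data $\Psi^{(0)}_\eps-\varphi^{(0)}_{n,x}$, $\Psi^{(1)}_\eps-\varphi^{(1)}_{n,x}$. Since every $\varphi_n$ satisfies the stability condition \eqref{stab1} and is uniformly bounded in $C(I;H^s)\cap C^1(I;H^{s-1})$ by \eqref{stimapsin}, I would apply Lemma \ref{lemmapsi01} with $r=s-1$ and the role of $\varphi$ played by $\varphi_n$; the exponential prefactor $e^{CT\max_{\tau\in I}(1+\|\varphi_{n,t}\|_{H^2}+\|\varphi_n\|_{H^s})}$ is then bounded by a constant $C$ independent of $n$ and $\eps$. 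This reduces \eqref{stimadiff6} to estimating the two initial-data norms $\|\Psi^{(1)}_\eps-\varphi^{(1)}_{n,x}\|_{H^{s-2}}$, $\|\Psi^{(0)}_\eps-\varphi^{(0)}_{n,x}\|_{H^{s-1}}$, and the forcing norm $\|G^{n,\eps}\|_{L^2(0,t;H^{s-2})}$.

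For the data, I would insert $\varphi_x^{(1)}$ and $\varphi_x^{(0)}$ by the triangle inequality: the terms $\|\Psi^{(1)}_\eps-\varphi_x^{(1)}\|_{H^{s-2}}$ and $\|\Psi^{(0)}_\eps-\varphi_x^{(0)}\|_{H^{s-1}}$ are bounded by $\eps$ through \eqref{stimaeps}, while $\|\varphi_x^{(1)}-\varphi^{(1)}_{n,x}\|_{H^{s-2}}\le\|\varphi^{(1)}-\varphi^{(1)}_n\|_{H^{s-1}}$ and $\|\varphi_x^{(0)}-\varphi^{(0)}_{n,x}\|_{H^{s-1}}\le\|\varphi^{(0)}-\varphi^{(0)}_n\|_{H^{s}}$; the factor $\mu+2\|\varphi_n^{(0)}\|_{H^2}$ is uniformly bounded by \eqref{stimapsin}. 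This produces the $\eps^2+\|\varphi^{(1)}-\varphi^{(1)}_n\|^2_{H^{s-1}}+\|\varphi^{(0)}-\varphi^{(0)}_n\|^2_{H^{s}}$ contributions.

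The core of the estimate is the forcing $G^{n,\eps}=(F^\eps-F)+(F-F^n)+2(\phi_{n,x}-\phi_x)\Psi^\eps_{xx}$, which I would treat term by term in $H^{s-2}$. The piece $F^\eps-F$ contributes at most $\eps$ by \eqref{stimaeps}. For $F-F^n$, recalling \eqref{defGneps}, I would apply \eqref{diffQs} to $(\mathcal Q[\varphi]-\mathcal Q[\varphi_n])_x$, and use the Banach-algebra property of $H^{s-2}$ (valid since $s-2>1/2$) together with \eqref{stima_hilbert} on the splitting $\phi_{xx}\varphi_{xx}-\phi_{n,xx}\varphi_{n,xx}=(\phi-\phi_n)_{xx}\varphi_{xx}+\phi_{n,xx}(\varphi-\varphi_n)_{xx}$; by the uniform bound \eqref{stimapsin} this gives $\|F-F^n\|_{H^{s-2}}\le C\|\varphi-\varphi_n\|_{H^{s}}$, and hence the term $\int_0^t\|\varphi-\varphi_n\|^2_{H^{s}}\,d\tau$ after squaring and integrating over $[0,t]$.

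The main obstacle, and the very reason for the triangularization device, is the last term $2(\phi_{n,x}-\phi_x)\Psi^\eps_{xx}$: here $\Psi^\eps_{xx}$ carries two derivatives, so in $H^{s-2}$ it costs $\|\Psi^\eps\|_{H^{s}}$, whose bound via \eqref{stimaPsi} blows up as $\eps\to0$. The key observation is that the difference factor need only be measured in the weaker norm $\|(\phi_n-\phi)_x\|_{H^{s-2}}\le\|\varphi_n-\varphi\|_{H^{s-1}}$, which is exactly the quantity already shown to vanish in Proposition \ref{convs-1}. Setting $M(\eps):=\|\Psi^\eps\|^2_{C(I;H^{s})}$, finite for each fixed $\eps$ by \eqref{stimaPsi}, the algebra estimate yields the pointwise bound $\|2(\phi_{n,x}-\phi_x)\Psi^\eps_{xx}\|_{H^{s-2}}\le C\|\varphi_n-\varphi\|_{H^{s-1}}\|\Psi^\eps\|_{H^{s}}$, and after squaring and integrating, $\|2(\phi_{n,x}-\phi_x)\Psi^\eps_{xx}\|^2_{L^2(0,t;H^{s-2})}\le CTM(\eps)\|\varphi_n-\varphi\|^2_{C(I;H^{s-1})}$. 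Collecting all contributions and absorbing the uniform constants into $C$ gives \eqref{stimadiff6}.
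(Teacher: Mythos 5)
Your proposal is correct and follows essentially the same route as the paper: applying Lemma \ref{lemmapsi01} with $r=s-1$ to problem \eqref{probdiff2} (with $\varphi_n$ in the coefficient, uniformly controlled by \eqref{stab1} and \eqref{stimapsin}), splitting $G^{n,\eps}=(F^\eps-F)+(F-F^n)+2(\phi_{n,x}-\phi_x)\Psi^\eps_{xx}$, bounding $\|F-F^n\|_{H^{s-2}}\le C\|\varphi-\varphi_n\|_{H^s}$ via \eqref{diffQs} and the algebra property, and paying the factor $M(\eps)$ only on the term where the difference is measured in the weak norm $H^{s-1}$. The only cosmetic difference is that you set $M(\eps)=\|\Psi^\eps\|^2_{C(I;H^s)}$ while the paper takes $M(\eps)$ to be the full right-hand side of \eqref{stimaPsi}, which changes nothing.
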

%
%
\begin{proof}
We apply Lemma \ref{lemmapsi01} with $r=s-1$ and obtain for every $t\in I$
\begin{multline}\label{stimadiff4}
 \|(\Psi^\eps  - \varphi_{n,x})(t) \|^2_{H^{s-1}} +  \|(\Psi^\eps  - \varphi_{n,x})_t(t) \|^2_{H^{s-2}} 
\\
 \le Ce^{C T\max_{\tau\in I}\left( 1+ \|\varphi_{n,t}\|_{H^2} + \|\varphi_n\|_{H^{s}}  \right)}  \Big\{  \| \Psi^{(1)}_\eps- \varphi_{n,x}^{(1)}\|^2_{H^{s-2}} 
 \\
 + (\mu +2\|\varphi^{(0)}_n\|_{H^2})\| \Psi^{(0)}_\eps- \varphi_{n,x}^{(0)} \|^2_{H^{s-1}}   +  \|G^{n,\eps}   \|_{L^2(0,t;H^{s-2})}^2
\Big\}.
\end{multline}
First of all, recalling the uniform boundedness w.r.t. $n$ \eqref{stimapsin}, we may write \eqref{stimadiff4} as
\begin{multline}\label{stimadiff5}
 \|(\Psi^\eps  - \varphi_{n,x})(t) \|^2_{H^{s-1}} +  \|(\Psi^\eps  - \varphi_{n,x})_t(t) \|^2_{H^{s-2}} 
\\
 \le C \Big\{  \| \Psi^{(1)}_\eps- \varphi_{n,x}^{(1)}\|^2_{H^{s-2}} 
 + \| \Psi^{(0)}_\eps- \varphi_{n,x}^{(0)} \|^2_{H^{s-1}}   +  \|G^{n,\eps}   \|_{L^2(0,t;H^{s-2})}^2
\Big\}.
\end{multline}
From the definition in \eqref{defGneps} we have, for every $\tau\in[0,t]$,
\begin{equation*}
\begin{array}{ll}\label{}
\|G^{n,\eps} \|_{H^{s-2}} \le \| F^\eps-F^n
 \|_{H^{s-2}} + 2\|\left(\phi_{n,x}-\phi_{x}\right)\Psi^\eps_{xx} \|_{H^{s-2}}  \\
\\
\le  \| F^\eps-F
 \|_{H^{s-2}}
+ \| F-F^n
 \|_{H^{s-2}} + C\|\varphi_n -\varphi\|_{H^{s-1}} \|\Psi^\eps \|_{H^{s}} 
.
\end{array}
\end{equation*}
Integrating this inequality in $\tau$ between $0$ and $t$ gives
\begin{multline}\label{42}
\ds \left|  \int_0^t\|G^{n,\eps} \|^2_{H^{s-2}}  d\tau \right|
\le  3\left| \int_0^t \| F^\eps-F
 \|^2_{H^{s-2}} d\tau \right|
+ 3\left| \int_0^t\| F-F^n
 \|^2_{H^{s-2}} d\tau \right| + CTM(\eps)\|\varphi_n -\varphi\|^2_{C(I;H^{s-1})} 
\end{multline}
where we have denoted
\[
 \ds  M(\eps) := Ce^{C T\max_{\tau\in I}\left( 1+ \|\varphi_t\|_{H^2} + \|\varphi\|_{H^{s}}  \right)}  \left\{  \|\Psi^{(1)}_\eps\|^2_{H^{s-1}} + (\mu +2\|\varphi^{(0)}\|_{H^2})\|\Psi^{(0)}_\eps\|^2_{H^{s}}   + \|F^\eps  \|_{L^2(I;H^{s-1})}^2
 \right\},
\]
that is the right-hand side of \eqref{stimaPsi}. 
On the other hand, for all $\tau$ we have
\begin{equation}
\begin{array}{ll}\label{diffF1}
 \| F-F^n
 \|_{H^{s-2}} \leq  \|  \mathcal Q\left[\varphi\right] - \mathcal Q\left[\varphi_n\right] \|_{H^{s-1}} + 2 \| \phi_{xx}\varphi_{xx} -
 \phi_{n,xx}\varphi_{n,xx} \|_{H^{s-2}}.
\end{array}
\end{equation}
From \eqref{diffQs} we have
\begin{equation}
\begin{array}{ll}\label{}
\|  \mathcal Q\left[\varphi\right] - \mathcal Q\left[\varphi_n\right] \|_{H^{s-1}}
 \le C (\|\varphi\|_{H^{s}}+ \|\varphi_n\|_{H^{s}}) \|\varphi-\varphi_n\|_{H^{s}} 
.
\end{array}
\end{equation}
Moreover, since $H^{s-2}$ is an algebra we can estimate
\begin{multline}\label{diffF2}
2 \| \phi_{xx}\varphi_{xx} -
 \phi_{n,xx}\varphi_{n,xx} \|_{H^{s-2}}
\\ \le 
C \| \phi_{xx} -
 \phi_{n,xx} \|_{H^{s-2}} \| \varphi_{xx}\|_{H^{s-2}}
+ C \| \phi_{n,xx}\|_{H^{s-2}} \|\varphi_{xx} -
\varphi_{n,xx} \|_{H^{s-2}}
\\
 \le C (\|\varphi\|_{H^{s}}+ \|\varphi_n\|_{H^{s}}) \|\varphi-\varphi_n\|_{H^{s}} 
.
\end{multline}
From \eqref{diffF1}--\eqref{diffF2} and the uniform boundedness \eqref{stimapsin} we obtain 
\begin{equation}
\begin{array}{ll}\label{diffF}
 \| F-F^n
 \|_{H^{s-2}}  \le C  \|\varphi-\varphi_n\|_{H^{s}} 
\qquad \forall \tau\in I,
\end{array}
\end{equation}
and substituting it in \eqref{42} gives
\begin{multline}\label{stimaGneps}
\ds \left|  \int_0^t\|G^{n,\eps} \|^2_{H^{s-2}}  d\tau \right|
\le  3\left| \int_0^t \| F^\eps-F
 \|^2_{H^{s-2}} d\tau \right|
+ C\left| \int_0^t\|\varphi-\varphi_n\|_{H^{s}}^2  d\tau \right| + CTM(\eps)\|\varphi_n -\varphi\|^2_{C(I;H^{s-1})} .
\end{multline}
Finally, from \eqref{stimaeps}, \eqref{stimadiff5}, \eqref{stimaGneps} we get

\begin{multline*}\label{}
 \|(\Psi^\eps  - \varphi_{n,x})(t) \|^2_{H^{s-1}} +  \|(\Psi^\eps  - \varphi_{n,x})_t(t) \|^2_{H^{s-2}} 
\\
 \le C \Big\{ \eps^2 + \| \varphi^{(1)}- \varphi_{n}^{(1)}\|^2_{H^{s-1}} 
 + \| \varphi^{(0)} - \varphi_{n}^{(0)} \|^2_{H^{s}}   +  \left| \int_0^t\|\varphi-\varphi_n\|_{H^{s}}^2  d\tau \right| + TM(\eps)\|\varphi_n -\varphi\|^2_{C(I;H^{s-1})}
\Big\}
\end{multline*}
for all $t\in I$, that is \eqref{stimadiff6}.
\end{proof}


\begin{proof}[Proof of Theorem \ref{main}]
Adding \eqref{stimadiff}, \eqref{stimadiff6}, and applying the Poincar\'e inequality gives
\begin{multline*}\label{}
 \|(\varphi  - \varphi_{n})(t) \|^2_{H^{s}} +  \|(\varphi  - \varphi_{n})_t(t) \|^2_{H^{s-1}} 
\\
 \le C \Big\{ \eps^2 + \| \varphi^{(1)}- \varphi_{n}^{(1)}\|^2_{H^{s-1}} 
 + \| \varphi^{(0)} - \varphi_{n}^{(0)} \|^2_{H^{s}}   +  \left| \int_0^t\|\varphi-\varphi_n\|_{H^{s}}^2  d\tau \right| + TM(\eps)\|\varphi_n -\varphi\|^2_{C(I;H^{s-1})}
\Big\}
\end{multline*}
for all $t\in I$. Then, applying the Gronwall lemma yields
\begin{equation}
\begin{array}{ll}\label{51}
\ds \|\varphi  - \varphi_{n} \|^2_{C(I;H^{s})} +  \|(\varphi  - \varphi_{n})_t \|^2_{C(I;H^{s-1})} 
\\
\ds \le C_3 \Big\{ \eps^2 + \| \varphi^{(1)}- \varphi_{n}^{(1)}\|^2_{H^{s-1}} 
 + \| \varphi^{(0)} - \varphi_{n}^{(0)} \|^2_{H^{s}}    + M(\eps)\|\varphi_n -\varphi\|^2_{C(I;H^{s-1})}
\Big\}.
\end{array}
\end{equation}
Given any $\eps'>0$, let $\eps=\eps(\eps')$ be such that $C_3\eps^2<\eps'/3$. With this fixed $\eps$ in $M(\eps)$, and taking account of Proposition \ref{convs-1}, let $n_0$ be such that, for any $n\ge n_0$,
\[
C_3\left\{\| \varphi^{(1)}- \varphi_{n}^{(1)}\|^2_{H^{s-1}} 
 + \| \varphi^{(0)} - \varphi_{n}^{(0)} \|^2_{H^{s}}  \right\} <\eps'/3,
\]
\[
C_3 M(\eps)\|\varphi_n -\varphi\|^2_{C(I;H^{s-1})}<\eps'/3.
\]
It follows from \eqref{51} that
\[
\ds  \|\varphi  - \varphi_{n} \|^2_{C(I;H^{s})} +  \|(\varphi  - \varphi_{n})_t \|^2_{C(I;H^{s-1})} 
 <\eps' \qquad \forall n\ge n_0.
\]
This concludes the proof of Theorem \ref{main}.
\end{proof}


\appendix                                                                                                                                                                   
\section{Some commutator estimates}\label{stima_commutatore}                                                                                                    

\begin{lemma}\label{lemma_comm}
 For $\t>1/2$ there exists a constant $C_\t>0$ such that
 \begin{eqnarray}
\Vert \left[\mathbb H\,;\,v\right]f\Vert_{L^2(\mathbb T)}\le C_\t\Vert v\Vert_{H^\t(\mathbb T)}\Vert f\Vert_{L^2(\mathbb T)}\,,\quad\forall\,v\in H^\t(\mathbb T)\,,\,\,\forall\,f\in L^2(\mathbb T)\,,
\label{stima_comm_1}\end{eqnarray}
where $\left[\mathbb H\,;\,v\right]$ is the commutator between the Hilbert transform $\mathbb H$ and the multiplication by $v$.
\end{lemma}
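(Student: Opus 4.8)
The plan is to sidestep the commutator cancellation altogether and reduce the bound to an $L^\infty$ control on $v$. First I would write $[\HH;v]f=\HH(vf)-v\,\HH f$ and apply the triangle inequality,
\[
\Vert[\HH;v]f\Vert_{L^2(\T)}\le \Vert\HH(vf)\Vert_{L^2(\T)}+\Vert v\,\HH f\Vert_{L^2(\T)}.
\]
Since $\HH$ is the Fourier multiplier of symbol $-i\,\mathrm{sgn}(k)$, it is a contraction on $L^2(\T)$ --- this is exactly \eqref{stima_hilbert} with $s=0$ --- so $\Vert\HH(vf)\Vert_{L^2}\le\Vert vf\Vert_{L^2}\le\Vert v\Vert_{L^\infty}\Vert f\Vert_{L^2}$ and likewise $\Vert v\,\HH f\Vert_{L^2}\le\Vert v\Vert_{L^\infty}\Vert\HH f\Vert_{L^2}\le\Vert v\Vert_{L^\infty}\Vert f\Vert_{L^2}$. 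Both pieces are therefore dominated by $\Vert v\Vert_{L^\infty}\Vert f\Vert_{L^2}$, giving $\Vert[\HH;v]f\Vert_{L^2}\le 2\Vert v\Vert_{L^\infty}\Vert f\Vert_{L^2}$.

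It then remains to absorb $\Vert v\Vert_{L^\infty}$ into $\Vert v\Vert_{H^\t}$, which is the Sobolev embedding $H^\t(\T)\hookrightarrow L^\infty(\T)$ valid precisely for $\t>1/2$. On the torus this is immediate from the Fourier series and the Cauchy--Schwarz inequality,
\[
\Vert v\Vert_{L^\infty}\le\sum_{k\in\Z}|\widehat v(k)|\le\Big(\sum_{k\in\Z}\langle k\rangle^{-2\t}\Big)^{1/2}\Big(\sum_{k\in\Z}\langle k\rangle^{2\t}|\widehat v(k)|^2\Big)^{1/2}=C_\t\Vert v\Vert_{H^\t},
\]
where $C_\t=\big(\sum_{k\in\Z}\langle k\rangle^{-2\t}\big)^{1/2}<\infty$ exactly because $2\t>1$. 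Combining the two displays yields \eqref{stima_comm_1} with constant $2C_\t$, and shows that the hypothesis $\t>1/2$ enters only through the $\ell^1$-summability of $\langle k\rangle^{-\t}$.

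As an alternative that exhibits the structure, I would compute on the Fourier side $\widehat{[\HH;v]f}(k)=-i\sum_{\l\in\Z}\big(\mathrm{sgn}(k)-\mathrm{sgn}(\l)\big)\widehat v(k-\l)\widehat f(\l)$, note that the factor $\mathrm{sgn}(k)-\mathrm{sgn}(\l)$ is bounded by $2$ and vanishes when $k,\l$ have the same sign, and then apply Schur's test to the kernel $|\widehat v(k-\l)|$, whose row and column sums both equal $\Vert\widehat v\Vert_{\ell^1}\le C_\t\Vert v\Vert_{H^\t}$ by the same Cauchy--Schwarz estimate. I do not expect a genuine analytic obstacle here: the only point that must be identified correctly is that $\t>1/2$ is exactly the $L^\infty$-embedding threshold, and that because the target norm on $v$ is $H^\t$ the crude triangle-inequality bound is already sharp up to constants, so the sign cancellation is not needed. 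That cancellation becomes essential only in the higher-order commutator estimates used in the body of the paper, where one must gain a derivative; for the baseline $L^2$ estimate of this lemma the direct argument suffices.
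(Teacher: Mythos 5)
Your proof is correct. Note first that the paper itself does not prove this lemma: it simply cites the companion paper \cite{M-S-T:ONDE1}, so there is no in-paper argument to compare against line by line. Your main route --- splitting $[\HH;v]f=\HH(vf)-v\,\HH f$, using that $\HH$ is an $L^2$ contraction (the paper's \eqref{stima_hilbert} with $s=0$), and absorbing $\Vert v\Vert_{L^\infty}$ into $\Vert v\Vert_{H^\t}$ via the embedding $H^\t(\T)\hookrightarrow L^\infty(\T)$ for $\t>1/2$ --- is complete and correct, and your key structural observation is also right: at this level of the estimate (no derivative is transferred from $f$ to $v$) the sign cancellation in $\mathrm{sgn}(k)-\mathrm{sgn}(\l)$ is genuinely unnecessary, whereas it becomes essential in estimates like \eqref{stima_comm_p}, where $p$ derivatives must be moved off $f$. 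For what it is worth, the proof in the cited reference, like the proof of Lemma \ref{commutatore_ds} that the paper does include, works on the Fourier side: one writes the commutator's Fourier coefficients as a convolution against the bounded factor $\mathrm{sgn}(k)-\mathrm{sgn}(\l)$ and applies Young's inequality $\ell^1\ast\ell^2\to\ell^2$ together with $\Vert\{\widehat v(k)\}\Vert_{\ell^1}\le C_\t\Vert v\Vert_{H^\t}$; this is precisely your alternative argument, so both of your routes are sound and the first is arguably the more elementary of the two. The only cosmetic caveat is a normalization constant ($2\pi$ factors) in your Fourier-side convolution formula, which is immaterial to the estimate.
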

\begin{proof}
The proof can be found in \cite{M-S-T:ONDE1}.
\end{proof}
\begin{lemma}\label{lemma_comm_ale_2}
For every real $\t\ge 0$ and integer $p\ge 0$ there exists a constant $C_{\t,p}>0$ such that
 for all functions $v\in H^{\t+p}(\mathbb T)$ and $f\in H^1(\mathbb T)$
\begin{equation}\label{stima_comm_p}
\Vert \left[\mathbb H\,;\,v\right]\partial^p_x f\Vert_{H^\t(\mathbb T)}\le C_{\t,p}\Vert \partial^{p}_x v\Vert_{H^\t(\mathbb T)}\Vert f\Vert_{H^1(\mathbb T)}\,.
\end{equation}
\end{lemma}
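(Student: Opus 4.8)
The plan is to prove \eqref{stima_comm_p} by a direct computation on the Fourier side, exploiting the fact that the symbol of the commutator $[\mathbb{H};v]$ is supported on frequency pairs $(k,\ell)$ with opposite signs, which is precisely the mechanism by which $[\mathbb{H};v]$ gains a derivative. Recall that on $\T$ the Hilbert transform is the Fourier multiplier $\widehat{\mathbb{H}u}(k)=-i\,\mathrm{sgn}(k)\,\widehat u(k)$ with $\mathrm{sgn}(0)=0$. Writing $[\mathbb{H};v]\partial_x^p f=\mathbb{H}(v\,\partial_x^p f)-v\,\mathbb{H}(\partial_x^p f)$ and expanding each product as a convolution, I would record the identity
\begin{equation*}
\widehat{[\mathbb{H};v]\partial_x^p f}(k)=-i\sum_{\ell\in\Z}\widehat v(k-\ell)\,m(k,\ell)\,(i\ell)^p\,\widehat f(\ell),\qquad m(k,\ell):=\mathrm{sgn}(k)-\mathrm{sgn}(\ell).
\end{equation*}
Here $|m(k,\ell)|\le 2$ and $m$ vanishes unless $k,\ell$ have opposite signs or one of them is zero; in particular $m$ is supported on $\{k\ell\le 0\}$, where $|k-\ell|=|k|+|\ell|$.

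The crux of the argument is the elementary inequality
\begin{equation*}
\langle k\rangle^\tau\,|m(k,\ell)|\,|\ell|^p\le 2\,\langle k-\ell\rangle^{\tau}\,|k-\ell|^{p},
\end{equation*}
valid on the support of $m$, which I would justify from $\langle k\rangle\le\langle k-\ell\rangle$ and $|\ell|\le|k-\ell|$ (both consequences of $|k-\ell|=|k|+|\ell|$ on $\{k\ell\le 0\}$) together with $|m|\le 2$. Setting $a(m):=\langle m\rangle^{\tau}|m|^{p}|\widehat v(m)|$, so that $\|a\|_{\ell^2(\Z)}=\|\partial_x^p v\|_{H^\tau}$ exactly, this gives the pointwise bound
\begin{equation*}
\langle k\rangle^\tau\big|\widehat{[\mathbb{H};v]\partial_x^p f}(k)\big|\le 2\sum_{\ell\in\Z}a(k-\ell)\,|\widehat f(\ell)|=2\,\big(a*|\widehat f\,|\big)(k).
\end{equation*}

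To conclude I would take the $\ell^2(\Z)$ norm in $k$ and apply Young's convolution inequality in the form $\|a*|\widehat f\,|\|_{\ell^2}\le\|a\|_{\ell^2}\,\||\widehat f\,|\|_{\ell^1}$, and then estimate $\||\widehat f\,|\|_{\ell^1}=\sum_\ell\langle\ell\rangle^{-1}\langle\ell\rangle|\widehat f(\ell)|\le\big(\sum_\ell\langle\ell\rangle^{-2}\big)^{1/2}\|f\|_{H^1}$ by Cauchy--Schwarz, using that $\sum_{\ell\in\Z}\langle\ell\rangle^{-2}<\infty$. This yields \eqref{stima_comm_p} with $C_{\tau,p}=2\big(\sum_\ell\langle\ell\rangle^{-2}\big)^{1/2}$. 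The case $p=0$ is covered verbatim (the $\ell=0$ term is then harmless), while for $p\ge1$ the $\ell=0$ contribution drops out because of the factor $|\ell|^p$.

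The step I expect to carry the whole proof is the support/derivative-gain inequality for $m(k,\ell)$: once $|\ell|^p$ is traded for $|k-\ell|^p$, it is absorbed into $\|\partial_x^p v\|_{H^\tau}$ rather than landing on $f$, which is exactly what makes only $\|f\|_{H^1}$ (through the summability of $\langle\ell\rangle^{-2}$, equivalently the embedding of $H^1$ into the Wiener algebra) appear on the right-hand side. No genuine obstacle remains after that; the rest is Young's inequality and Cauchy--Schwarz. As an alternative I could instead iterate the Leibniz-type recursion $[\mathbb{H};v]\partial_x g=\partial_x([\mathbb{H};v]g)-[\mathbb{H};v_x]g$ to reduce to lower-order commutators, but that route forces $\tau>1/2$ at intermediate steps and so does not cleanly reach $\tau\ge0$; the direct Fourier computation above is both more transparent and valid in the full stated range.
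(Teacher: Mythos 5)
Your proof is correct, and it is worth noting that this paper does not actually contain a proof of Lemma \ref{lemma_comm_ale_2} at all: it simply defers to the companion paper \cite{M-S-T:ONDE2}, so your argument cannot literally coincide with an in-paper proof. What you wrote is a self-contained derivation in the same spirit as the technique the paper \emph{does} display in its appendix for Lemma \ref{commutatore_ds}: expand the commutator on the Fourier side, bound the symbol difference pointwise, apply Young's convolution inequality in $\ell^2\ast\ell^1$, and control the $\ell^1$-norm of $\{\vert\widehat f(\ell)\vert\}$ via the embedding of a Sobolev space into the Wiener algebra (here through $\sum_\ell\langle\ell\rangle^{-2}<\infty$, which is exactly the $H^1$ version of \eqref{stima_coeff_comm3}). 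The genuinely different ingredient, and the one that makes your proof work for all $\tau\ge 0$ and gain all $p$ derivatives, is your exploitation of the specific structure of the Hilbert transform symbol: $m(k,\ell)=\mathrm{sgn}(k)-\mathrm{sgn}(\ell)$ is supported on $\{k\ell\le 0\}$, where $\vert k-\ell\vert=\vert k\vert+\vert\ell\vert$, so both $\langle k\rangle^\tau$ and $\vert\ell\vert^p$ can be transferred onto the frequency of $v$; this replaces the mean-value-theorem estimate \eqref{stima_diff_ds} used for $[\langle\partial_x\rangle^\tau;v]$, which only trades one derivative and is why that lemma needs the extra structure of \eqref{stima_comm_ds}--\eqref{stima_comm_ds2}. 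All the individual steps check out: the Fourier identity for $\widehat{[\mathbb H;v]\partial_x^p f}(k)$ is right, the inequality $\langle k\rangle^\tau\vert m\vert\,\vert\ell\vert^p\le 2\langle k-\ell\rangle^\tau\vert k-\ell\vert^p$ holds on the support since $\tau\ge0$ and $p\ge0$, the identification $\Vert a\Vert_{\ell^2}=\Vert\partial_x^p v\Vert_{H^\tau}$ is exact with the paper's definition \eqref{normaHs}, and Young plus Cauchy--Schwarz close the estimate with an explicit constant. Your closing remark is also apt: the recursion $[\mathbb H;v]\partial_x g=\partial_x([\mathbb H;v]g)-[\mathbb H;v_x]g$ combined with \eqref{stima_comm_1} would force $\tau>1/2$ at intermediate stages, so the direct Fourier route is the right one for the full stated range.
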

\begin{proof}
The proof can be found in \cite{M-S-T:ONDE2}.
\end{proof}
\begin{lemma}\label{commutatore_ds}
For every real $\t\ge 1$ and $\sigma>1/2$ there exists a constant $C_{\t,\sigma}>0$ such that
\begin{itemize}
\item[i.] for all $f\in H^{\t-1}(\mathbb T)\cap H^\sigma(\mathbb T)$ and $v\in H^\t(\mathbb T)\cap H^2(\mathbb T)$
\begin{equation}\label{stima_comm_ds}
\Vert \left[\langle\partial_x\rangle^\t\,;\,v\right]f\Vert_{L^2(\mathbb T)}\le C_{\t,\sigma}\left\{\Vert v\Vert_{H^\t(\mathbb T)}\Vert f\Vert_{H^\sigma(\mathbb T)}+\Vert v_x\Vert_{H^1(\mathbb T)}\Vert f\Vert_{H^{\t-1}(\mathbb T)}\right\}\,;
\end{equation}
\item[ii.] for all $f\in H^{\t-1}(\mathbb T)$ and $v\in H^{\t+\sigma}(\mathbb T)\cap H^2(\mathbb T)$
\begin{equation}\label{stima_comm_ds1}
\Vert \left[\langle\partial_x\rangle^\t\,;\,v\right]f\Vert_{L^2(\mathbb T)}\le C_{\t,\sigma}\left\{\Vert v\Vert_{H^{\t+\sigma}(\mathbb T)}\Vert f\Vert_{L^2(\mathbb T)}+\Vert v_x\Vert_{H^1(\mathbb T)}\Vert f\Vert_{H^{\t-1}(\mathbb T)}\right\}\,.
\end{equation}
\end{itemize}
For all $\t\ge 1$ there exists a positive constant $C_\t$ such that for all $f\in H^{\t-1}(\mathbb T)\cap H^{1/2}(\mathbb T)$ and $v\in H^{\t+1/2}(\mathbb T)\cap H^2(\mathbb T)$
\begin{equation}\label{stima_comm_ds2}
\Vert \left[\langle\partial_x\rangle^\t\,;\,v\right]f\Vert_{L^2(\mathbb T)}\le C_{\t}\left\{\Vert v\Vert_{H^{\t+1/2}(\mathbb T)}\Vert f\Vert_{H^{1/2}(\mathbb T)}+\Vert v_x\Vert_{H^1(\mathbb T)}\Vert f\Vert_{H^{\t-1}(\mathbb T)}\right\}\,.
\end{equation}
\end{lemma}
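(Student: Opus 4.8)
The plan is to reduce all three estimates to Parseval's identity and two elementary convolution bounds on Fourier coefficients, and then to deduce the borderline estimate \eqref{stima_comm_ds2} from the other two by interpolation rather than by a direct argument.

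First I would compute, for functions on $\T$,
\[
\widehat{[\langle\partial_x\rangle^\t;v]f}(k)=\sum_{\l\in\Z}\bigl(\lkr^\t-\llr^\t\bigr)\,\widehat v(k-\l)\,\widehat f(\l),
\]
so that by Parseval the three left-hand sides are the $\ell^2_k$ norms of these series. Since the diagonal term $\l=k$ gives the factor $\lkr^\t-\lkr^\t=0$, the mean of $v$ never enters; hence I may assume $v$ has zero mean, freely replace $\|v\|_{H^2}$ by $\|v_x\|_{H^1}$ via Poincar\'e, and use that $\sum_{j\ne0}\langle j\rangle^{-2}<\infty$ yields $\sum_{j\ne0}\langle j\rangle|\widehat v(j)|\le C\|v_x\|_{H^1}$. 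The basic symbol bound comes from the mean value theorem applied to $\xi\mapsto(1+\xi^2)^{\t/2}$, whose derivative is $O(\langle\xi\rangle^{\t-1})$: for $\t\ge1$,
\[
\bigl|\lkr^\t-\llr^\t\bigr|\le C_\t\,|k-\l|\,\max(\lkr,\llr)^{\t-1}.
\]

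Next I would split the sum into the region $\llr>2\lklr$ and the region $\llr\le2\lklr$. On the first region one has $\lkr\approx\llr$ (by the $1$-Lipschitz property of $\langle\cdot\rangle$), so the symbol is $\le C\lklr\,\llr^{\t-1}$, and the corresponding piece is the convolution of $(\lklr|\widehat v(k-\l)|)$ with $(\llr^{\t-1}|\widehat f(\l)|)$; Young's inequality in the form $\ell^1*\ell^2$, together with the bound on $\sum_{j\ne0}\langle j\rangle|\widehat v(j)|$, controls it by $C\|v_x\|_{H^1}\|f\|_{H^{\t-1}}$. This is exactly the second term common to \eqref{stima_comm_ds}, \eqref{stima_comm_ds1} and \eqref{stima_comm_ds2}, so it needs no case distinction. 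On the region $\llr\le2\lklr$ one has $\lkr\le3\lklr$, hence $\max(\lkr,\llr)^{\t-1}\le C\lklr^{\t-1}$ and the symbol is $\le C\lklr^\t$; the corresponding piece $R(v,f)$ is dominated by the convolution of $(\lklr^\t|\widehat v(k-\l)|)$ with $(|\widehat f(\l)|)$. Estimating this convolution by $\ell^2*\ell^1$ and using $\sum_\l|\widehat f(\l)|\le C(\sum_\l\llr^{-2\sigma})^{1/2}\|f\|_{H^\sigma}$ gives $\|R(v,f)\|\le C\|v\|_{H^\t}\|f\|_{H^\sigma}$, which is \eqref{stima_comm_ds}; estimating it instead by $\ell^1*\ell^2$ and using $\sum_j\langle j\rangle^\t|\widehat v(j)|\le C\|v\|_{H^{\t+\sigma}}$ gives $\|R(v,f)\|\le C\|v\|_{H^{\t+\sigma}}\|f\|_{L^2}$, which is \eqref{stima_comm_ds1}. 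Both steps rest on $\sum_j\langle j\rangle^{-2\sigma}<\infty$, i.e.\ $\sigma>1/2$, and this is precisely where the restriction $\sigma>1/2$ comes from.

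The main obstacle is the borderline case \eqref{stima_comm_ds2}, where $\sigma=1/2$ and the series $\sum_j\langle j\rangle^{-1}$ diverges logarithmically, so neither Young splitting (nor a crude Schur test on the kernel $\lklr^{-1/2}\llr^{-1/2}$) closes. I would avoid a direct estimate by bilinear complex interpolation: taking $\sigma=1$ in the two product bounds just proved shows that the bilinear operator $R$ maps $H^\t\times H^1\to L^2$ and $H^{\t+1}\times L^2\to L^2$ boundedly; interpolating in both slots at $\theta=1/2$, using $[H^a,H^b]_\theta=H^{(1-\theta)a+\theta b}$ on $\T$, yields $R:H^{\t+1/2}\times H^{1/2}\to L^2$, that is $\|R(v,f)\|\le C\|v\|_{H^{\t+1/2}}\|f\|_{H^{1/2}}$. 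Adding the region $\llr>2\lklr$ bound $C\|v_x\|_{H^1}\|f\|_{H^{\t-1}}$, which is unaffected, produces \eqref{stima_comm_ds2}. An alternative would be a dyadic Littlewood--Paley decomposition that absorbs the logarithm through Bernstein's inequality and the almost-orthogonality of comparable frequency blocks, but the interpolation route is shorter and sidesteps the delicate summation at the endpoint.
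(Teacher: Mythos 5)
Your proof is correct. For the two estimates \eqref{stima_comm_ds} and \eqref{stima_comm_ds1} it is essentially the paper's argument in different clothing: the paper expands the commutator in Fourier coefficients, writes $\langle k\rangle^\tau-\langle\ell\rangle^\tau$ by the fundamental theorem of calculus with the derivative bound $|D\langle\xi\rangle^\tau|\le C_\tau\langle\xi\rangle^{\tau-1}$, and splits the symbol by sub-additivity into a $\langle k-\ell\rangle^{\tau}$ piece and a $|k-\ell|\,\langle\ell\rangle^{\tau-1}$ piece --- the same dichotomy you obtain by splitting the frequency plane into $\langle\ell\rangle\le 2\langle k-\ell\rangle$ and its complement; after that, both proofs run the identical $\ell^2\ast\ell^1$ / $\ell^1\ast\ell^2$ Young scheme, with $\sigma>1/2$ entering exactly where you say it does.

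Where you genuinely diverge is the borderline estimate \eqref{stima_comm_ds2}. The paper proves it directly, staying at the level of sequence spaces: it applies Young's inequality in the form $\ell^{4/3}\ast\ell^{4/3}\subset\ell^2$ to the term $\vert\widehat{\langle\partial_x\rangle^\tau v}\vert\ast\vert\widehat f\vert$ and then invokes the elementary embedding $\Vert\{\widehat f\}\Vert_{\ell^p}\le C_p\Vert f\Vert_{H^{1/2}(\T)}$ for $p\in\,]1,2]$ (a H\"older computation), used with $p=4/3$ on both factors. You instead deduce the endpoint by bilinear complex interpolation at $\theta=1/2$ between the $\sigma=1$ instances of the two previous bounds. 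This is valid, and you set it up correctly on the one point where it could go wrong: bilinear interpolation needs endpoint bounds of pure product form, so it cannot be applied to the full estimates \eqref{stima_comm_ds}, \eqref{stima_comm_ds1}, which contain the mixed term $\Vert v_x\Vert_{H^1}\Vert f\Vert_{H^{\tau-1}}$; you isolate the genuinely bilinear piece $R$ supported in the region $\langle\ell\rangle\le2\langle k-\ell\rangle$, whose endpoint bounds \emph{are} products, interpolate only that piece, and add back the region-1 bound, which is common to all three estimates. The trade-off is clear: the paper's route is self-contained and elementary, with explicit constants, while yours is shorter at the endpoint but imports the multilinear complex interpolation theorem, the identification $[H^a,H^b]_\theta=H^{(1-\theta)a+\theta b}$ on $\T$, and the (routine) density remark that the interpolated extension of $R$ from $H^{\tau+1}\times H^{1}$ agrees with the commutator piece on $H^{\tau+1/2}\times H^{1/2}$. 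Both are complete proofs; yours also generalizes painlessly to other intermediate exponents by varying $\theta$.
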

\begin{proof}
For all $k\in\mathbb Z$ we compute
\begin{equation}\label{coeff_fourier_comm}
\begin{split}
\widehat{\left[\langle\partial_x\rangle^\t\,;\,v\right]f}(k)&=\langle k\rangle^\t\widehat{vf}(k)-\widehat{v\langle\partial_x\rangle^\t f}(k)\\
&=\frac1{2\pi}\langle k\rangle^\t\sum\limits_{\ell}\widehat{v}(k-\ell)\widehat f(\ell)-\frac1{2\pi}\sum\limits_{\ell}\widehat{v}(k-\ell)\langle\ell\rangle^\t\widehat{f}(\ell)\\
&=\frac1{2\pi}\sum\limits_{\ell}\left(\langle k\rangle^\t-\langle\ell\rangle^\t\right)\widehat{v}(k-\ell)\widehat{f}(\ell)\,.
\end{split}
\end{equation}
On the other hand we have
\begin{equation*}
\langle k\rangle^\t-\langle\ell\rangle^\t=\int_0^1\frac{d}{d\theta}\left(\langle \ell+\theta(k-\ell)\rangle^\t\right)\,d\theta
=(k-\ell)\int_0^1 D\left(\langle \cdot\rangle^\t\right)(\ell+\theta(k-\ell))d\theta\,,
\end{equation*}
where $D$ denotes the derivative of the function $\langle \cdot\rangle^\t$. Combining the preceding with the estimate
\begin{equation}\label{derivate_ds}
\left\vert\frac{d}{d\xi}\langle \xi\rangle^\t\right\vert\le C_\t\langle\xi\rangle^{\t-1}\,,\quad\forall\,\xi\in\mathbb R\,,
\end{equation}
then gives
\begin{equation}\label{stima_diff_ds}
\vert\langle k\rangle^\t-\langle\ell\rangle^\t\vert\le\vert k-\ell\vert\int_0^1 \vert D\left(\langle \cdot\rangle^\t\right)(\ell+\theta(k-\ell))\vert d\theta\le C_\t\vert k-\ell\vert\int_0^1\langle \ell+\theta(k-\ell)\rangle^{\t-1}\,d\theta\,.
\end{equation}
Using \eqref{stima_diff_ds}, from \eqref{coeff_fourier_comm} we get
\begin{equation}\label{stima_coeff_comm}
\begin{split}
\vert\widehat{\left[\langle\partial_x\rangle^\t\,;\,v\right]f}(k)\vert&\le\frac1{2\pi}\sum\limits_{\ell}\left\vert\langle k\rangle^\t-\langle\ell\rangle^\t\right\vert\vert\widehat{v}(k-\ell)\vert\vert\widehat{f}(\ell)\vert\\
&\le C_\t\sum\limits_{\ell}\int_0^1\vert k-\ell\vert\langle \ell+\theta(k-\ell)\rangle^{\t-1}\vert\widehat{v}(k-\ell)\vert\vert\widehat{f}(\ell)\vert\,d\theta\,.
\end{split}
\end{equation}
Since the function $\langle\zeta\rangle^{\t-1}$ is sub-additive and $0\le\theta\le 1$, we have
\begin{equation}\label{sub-add}
\langle\ell+\theta(k-\ell)\rangle^{\t-1}\le C_\t\left\{\langle\theta(k-\ell)\rangle^{\t-1}+\langle\ell\rangle^{\t-1}\right\}\le C_\t\left\{\langle k-\ell\rangle^{\t-1}+\langle\ell\rangle^{\t-1}\right\}\,,
\end{equation}
with positive constant $C_\t$ depending only on $\t$. Using \eqref{sub-add} to estimate the right-hand side of \eqref{stima_coeff_comm} then gives
\begin{equation}\label{stima_coeff_comm1}
\begin{split}
\vert\widehat{\left[\langle\partial_x\rangle^\t\,;\,v\right]f}(k)\vert&
\le C_\t\sum\limits_{\ell}\left\{\int_0^1\langle k-\ell\rangle^{\t}\vert\widehat{v}(k-\ell)\vert\vert\widehat{f}(\ell)\vert\,d\theta+\int_0^1\vert k-\ell\vert \vert\widehat{v}(k-\ell)\langle\ell\rangle^{\t-1}\vert\vert\widehat{f}(\ell)\vert\,d\theta\right\}\\
&\le C^\prime_\t\left\{\left(\vert\widehat{\langle\partial_x\rangle^\t v}\vert\ast\vert\widehat{f}\vert\right) (k)+\left(\vert\widehat{v_x}\vert\ast\vert\widehat{\langle\partial_x\rangle^{\t-1}f}\vert\right)(k)\right\}\,.
\end{split}
\end{equation}
Using Parseval's identity and Young's inequality with $\left\{\vert\widehat{\langle\partial_x\rangle^\t v}(k)\vert\right\}\in\ell^2$, $\left\{\vert\widehat{f}(k)\vert\right\}\in\ell^1$, $\left\{\vert\widehat{v_x}(k)\vert\right\}\in\ell^1$,  $\left\{\vert\widehat{\langle\partial_x\rangle^{\t-1} f}(k)\vert\right\}\in\ell^2$, from \eqref{stima_coeff_comm1} we derive
\begin{equation}\label{stima_coeff_comm2}
\begin{split}
\Vert\left[\langle\partial_x\rangle^\t\,;\,v\right]f\Vert_{L^2(\mathbb T)}\le C^\prime_\t\left\{\Vert\{\vert\widehat{\langle\partial_x\rangle^\t v}\vert\}\Vert_{\ell^2}\Vert\{\vert\widehat{f}\vert\}\Vert_{\ell^1}+\Vert\{\vert\widehat{v_x}\vert\}\Vert_{\ell^1}\Vert\{\vert\widehat{\langle\partial_x\rangle^{\t-1}f}\vert\}\Vert_{\ell^2}\right\}\,.
\end{split}
\end{equation}
We get the first inequality \eqref{stima_comm_ds} of Lemma \ref{commutatore_ds}, by using once again Parseval's identity and the estimates
\begin{equation}\label{stima_coeff_comm3}
\Vert\{\vert\widehat{f}\vert\}\Vert_{\ell^1}\le C_\sigma\Vert f\Vert_{H^\sigma(\mathbb T)}\,,\qquad \Vert\{\vert\widehat{v_x}\vert\}\Vert_{\ell^1}\le C\Vert v_x\Vert_{H^1(\mathbb T)}\, .
\end{equation}

To get the second inequality \eqref{stima_comm_ds1} it is sufficient to interchange the role of the sequences $\{\vert\widehat{\langle\partial_x\rangle^\t v}\vert\}$ and $\{\vert\widehat{f}\vert\}$ when we apply Young's inequality to the first term in the right-hand side of \eqref{stima_coeff_comm1}, by taking the $\ell^1$-norm of  $\{\vert\widehat{\langle\partial_x\rangle^\t v}\vert\}$ and the $\ell^2$-norm of $\{\vert\widehat{f}\vert\}$; then the $\ell^1-$norm of $\{\vert\widehat{\langle\partial_x\rangle^\t v}\vert\}$ is estimated again by 
the first inequality in \eqref{stima_coeff_comm3}
\begin{equation*}
\Vert\{\vert\widehat{\langle\partial_x\rangle^\t v}\vert\}\Vert_{\ell^1}\le C_\sigma\Vert \langle\partial_x\rangle^\t v\Vert_{H^\sigma(\mathbb T)}\le C_\sigma\Vert v\Vert_{H^{\t+\sigma}(\mathbb T)}\,.
\end{equation*}

To obtain the last inequality \eqref{stima_comm_ds2}, the $\ell^2-$norm of the sequence $\{\vert\widehat{\langle\partial_x\rangle^\t v}\vert\ast\vert\widehat{f}\vert\}$ in the right-hand side of \eqref{stima_coeff_comm1} is estimated by Young's inequality as
\begin{equation}\label{stima_coeff_comm4}
\Vert \{\vert\widehat{\langle\partial_x\rangle^\t v}\vert\ast\vert\widehat{f}\vert\} \Vert_{\ell^2}\le \Vert \{\vert\widehat{\langle\partial_x\rangle^\t v}\vert\}\Vert_{\ell^{4/3}}\Vert\{\vert\widehat{f}\vert\}\Vert_{\ell^{4/3}}\,;
\end{equation}
then recalling that for every $p\in]1,2]$ a positive constant $C_p$ exists such that
\begin{equation}\label{stima_coeff_comm5}
\Vert \{\widehat{f}\}\Vert_{\ell^p}\le C_p\Vert f\Vert_{H^{1/2}(\mathbb T)}\,,\quad\forall\,f\in H^{1/2}(\mathbb T)\,,
\end{equation}
the $\ell^{4/3}-$norms in the right-hand side of \eqref{stima_coeff_comm4} are estimated as
\begin{equation}\label{stima_coeff_comm6}
\Vert \{\vert\widehat{\langle\partial_x\rangle^\t v}\vert\}\Vert_{\ell^{4/3}}\le C\Vert \langle\partial_x\rangle^\t v\Vert_{H^{1/2}(\mathbb T)}\le C\Vert v\Vert_{H^{\t+1/2}(\mathbb T)}\,,\qquad \Vert \{\vert\widehat{f}\vert\}\Vert_{\ell^{4/3}}\le C\Vert f\Vert_{H^{1/2}(\mathbb T)}
\end{equation}
(that is \eqref{stima_coeff_comm5} with $p=\frac{4}{3}$). Then \eqref{stima_comm_ds2} follows from gathering the estimates \eqref{stima_coeff_comm4}, \eqref{stima_coeff_comm6} and repeating for the rest the same calculations as above.
\end{proof}




\end{document}